\newtheorem{theorem}{Theorem}[section]
\newtheorem{proposition}{Proposition}[section]
\newtheorem{corollary}{Corollary}[section]
\newtheorem{lemma}{Lemma}[section]
\theoremstyle{remark}
\newtheorem{remark}{Remark}[section]
\newtheorem{example}{Example}[section]
\newtheorem{assumption}{Assumption}[section]
\newcommand{\qede}{\hspace*{\fill}$\Diamond$\medskip}
\newcommand{\Hilbert}{\mathcal{H}}
\newcommand{\R}{\mathbb{R}}
\newcommand{\N}{\mathbb{N}}
\def\DR{\mathop{\rm DR}\nolimits}
\def\tto{\rightrightarrows}
\title{Global Behavior of the Douglas--Rachford Method\\
       for a Nonconvex Feasibility Problem}
\author{Francisco J. Arag\'on Artacho\thanks{Department of Statistics and Operations Research,
University of Alicante, \textsc{Spain}. e-mail:~\url{francisco.aragon@ua.es}}
        \and Jonathan M. Borwein\thanks{Centre for Computer-Assisted Research Mathematics and its Applications (CARMA), University of Newcastle, Callaghan, NSW 2308, \textsc{Australia}. e-mail:~\url{jon.borwein@gmail.com}}
        \and Matthew K. Tam\thanks{Centre for Computer-Assisted Research Mathematics and its Applications (CARMA), University of Newcastle, Callaghan, NSW 2308, \textsc{Australia}. e-mail:~\url{matthew.tam@uon.edu.au}}}
\begin{document}
\maketitle

\begin{abstract}
 In recent times the Douglas--Rachford algorithm has been observed empirically to solve a variety of nonconvex feasibility problems including those of a combinatorial nature. For many of these problems current theory is not sufficient to explain this observed success and is mainly concerned with questions of local convergence. In this paper we analyze global behavior of the method for finding a point in the intersection of a half-space and a potentially non-convex set which is assumed to satisfy a well-quasi-ordering property or a property weaker than compactness. In particular, the special case in which the second set is finite is covered by our framework and provides a prototypical setting for combinatorial optimization problems.
 \end{abstract}

\paragraph*{Keywords} Douglas--Rachford algorithm, global convergence, feasibility problem, half-space, non-convex
\paragraph*{MSC2010:} 90C26, 65K05

\section{Introduction}
 Recent computational experiments have demonstrated the ability of \emph{Douglas--Rachford methods} to successfully solve a variety of significant non-convex optimization problems. Examples include combinatorial optimization \cite{ABTcomb,ABTmatrix,ERTsearching}, low-rank matrix reconstruction \cite{BTreflection}, boolean satisfiability \cite{GEdivide}, sphere packing \cite{GEdivide,ERTsearching}, matrix completion \cite{ABTmatrix}, image reconstruction \cite{BCLphase}, and road design \cite{BKroad}. Intriguingly, success in such settings is not uniform \cite{ABTcomb}, and thus the theory is in need of significant enhancement.

 Douglas--Rachford methods belong to the family of so called \emph{projection algorithms}  and are traditionally analyzed using nonexpansivity properties when the problem is convex. For non-convex problems the theory is still in its infancy. One approach to proving convergence is to replace assumptions of convexity with regularity properties not tantamount to convexity \cite{HLnonconvex,Plinear,BNlocal,HNLsparse}. Examples of the properties required include: \emph{superregularity}, \emph{strong regularity of sets}, \emph{$(\epsilon,\delta)$-regularity}, and exploiting convex structure ({\em i.e.,} realizing a non-convex constraint as the union of certain convex sets). Being local in nature, these assumptions lead to local results; that is, they hold in a sufficiently small neighborhood of a solution. An alternate, albeit less general, approach for proving convergence is to instead focus on specific non-convex feasibility problems. In this direction, global convergence of the method has been established for the two set feasibility problem involving a line (or hyperplane) and sphere \cite{BSabense,ABglobal,benoist}.

 In the non-convex feasibility problems for which Douglas--Rachford methods have been successful, it is the methods' apparent global convergence properties which deserve greater attention, and hence on which we shall focus. This is the case, for instance, in problems having discrete or combinatorial constraints where local convergence can be deduced from the general theory of convex sets as a consequence of the \emph{local convexity} of the constraints. Furthermore, an implementation of a Douglas--Rachford method would always \emph{round} the current iterate, and check if this rounded iterate is a solution to the problem. That is, in practice, the algorithm is never \emph{run locally}.

  The purpose of this paper is to analyze global properties of the basic Douglas--Rachford method. Of the two approaches already discussed, ours bears more resemblance to the latter. We focus on a two set feasibility problem with the first set allowed to be quite general --- it must satisfy either an assumption which encompasses all (weakly) compact sets, or a well-quasi-ordering property with respect to a quasi-order induced by the distance function. Both assumptions always hold for finite sets.  The trade-off for this assumption is that a more constrained structure is required of the second set ---  it must be a closed half-space. The fact that this analysis is far from immediate, illustrates the subtleness of the method's global behavior, even for common problems.

  The remainder of this paper is organized as follows. In Section~\ref{sec:prelim} we introduce the problem to be solved and the basic Douglas--Rachford algorithm. In Section~\ref{sec:properties} we study properties of the Douglas--Rachford operator. In Section~\ref{sec:analysis} we analyze the global behavior of the method and give our main results. In Section~\ref{sec:examples} we give various examples and counter-examples. Finally, in Section~\ref{sec:conclusion}, we make our concluding remarks.

\section{Preliminaries}\label{sec:prelim}
Through this paper our setting is the real Hilbert space $\Hilbert$ equipped with inner-product $\langle\cdot,\cdot\rangle$ and induced norm $\|\cdot\|$. We consider the \emph{feasibility problem}
 \begin{equation}
  \text{Find }x\in H\cap Q,\label{eq:Problem}
 \end{equation}
where $Q\subset\Hilbert$ is a closed set, and $H\subset\Hilbert$ is a (closed) half-space. We will be concerned with the case in which $Q$ has additional properties (see Assumptions~\ref{ass:qwo} \&~\ref{ass:compactQH}), but is intended to be as general as possible -- these assumptions are explicitly stated where needed. It is convenient to represent $H$, and its \emph{dividing hyperplane} $L$, in the form
$$H:=\{x\in\Hilbert\mid \langle a,x\rangle\leq b\},\qquad L:=\{x\in\Hilbert\mid \langle a,x\rangle =b\},$$
where $b\in\R$, and $a\in\Hilbert$ with $\|a\|=1$.

To solve \eqref{eq:Problem} we employ the specific \emph{Douglas--Rachford operator}, denoted $\DR:\Hilbert\tto\Hilbert$,  given by
 \begin{equation}
  \DR(x):=\frac{x+R_H(R_Q(x))}{2}.\label{eq:DR}
 \end{equation}
Here $R_A$ denotes the \emph{reflector} of a point $x$ with respect to the set $A\subset\Hilbert$  given by $R_A(x):=2P_A(x)-x$, and $P_A$ stands for the \emph{projector} of $x$ onto $A$ given by
 $$P_A(x):=\left\{z\in A\;\middle|\; \|x-z\|=\inf_{a\in A}\|x-a\|\right\}.$$
When $x$ is a \emph{fixed point} of the Douglas--Rachford operator ({\em i.e.,} $x\in\DR(x)$) there is an element of $P_Q(x)$ which solves \eqref{eq:Problem} as is easy to confirm. This suggests that iterating the Douglas--Rachford operator to find a fixed point is a potential method for solving \eqref{eq:Problem}.

Given an initial point $x_0\in\Hilbert$, we say the sequence $\{x_k\}_{k\in\mathbb{N}}$ is a \emph{Douglas--Rachford iteration} if
\begin{equation*}
x_{k+1}\in\DR(x_k)\quad\text{for}\quad k\in\mathbb{N}.
\end{equation*}

We now make explicit the precise form of the Douglas--Rachford operator for our feasibility problem in \eqref{eq:Problem}. The projections onto $L$ and $H$ are given by
$$P_L(x)=x-(\langle a,x\rangle-b)a,\quad
P_H(x)=\begin{cases}
        x           & \text{if } \langle a,x\rangle\leq b,\\
        x-(\langle a,x\rangle-b)a &\text{if }\langle a,x\rangle>b.
       \end{cases}$$
 We assume $P_Q$ is  effectively computable. The corresponding reflections are given by
\begin{equation*}
R_L(x)=x-2(\langle a,x\rangle-b)a,\quad
R_H(x)=\begin{cases}
x & \text{if } \langle a,x\rangle\leq b,\\
x-2(\langle a,x\rangle-b)a &\text{if }\langle a,x\rangle>b,
\end{cases}
\end{equation*}
and $R_Q(x)=2P_Q(x)-x$.
The Douglas--Rachford operator~\eqref{eq:DR} may therefore be expressed as
  $$\DR(x)=\bigcup_{q\in P_Q(x)}\DR(x,q),$$
where
\begin{equation}\label{eq:DRiteration}
 \DR(x,q):=\begin{cases}
            q &\text{if }\langle a,2q-x\rangle\leq b,\\
            q+(\langle a,x\rangle+b-2\langle a,q\rangle)a &\text{if }\langle a,2q-x\rangle> b.
          \end{cases}
\end{equation}
Under this notation, a sequence $\{x_k\}_{k\in\mathbb{N}}$ is a Douglas--Rachford iteration if
 $$x_{k+1}=\DR(x_k,q_k)\quad\text{for some}\quad q_k\in P_Q(x_k).$$
We shall refer to the sequence $\{q_k\}_{k\in\mathbb{N}}$ as an \emph{auxiliary sequence} for $\{x_k\}$.

An example of a Douglas--Rachford iteration in the plane when $Q$ is a  set of  four points is given in Figure~\ref{Fig:1}.  The iteration converges to a solution of \eqref{eq:Problem} in eight steps. This behavior is explained by Theorem~\ref{th:finiteQ}.

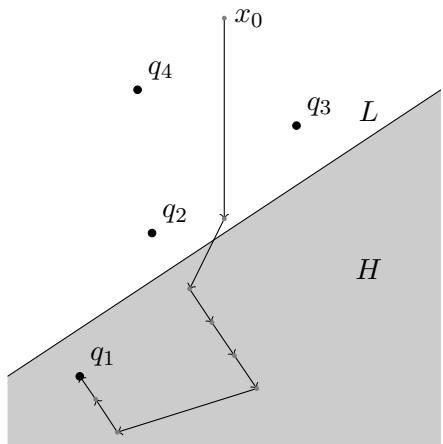
\begin{figure}[ht!]
\begin{center}
    \begin{tikzpicture}[scale=0.95]
	 \newcommand{\SetQ}{(-2,-2),(-1,0),(1,1.5),(-1.2,2)};
	 \fill[black!20] (-3,-2) -- (-3,-3) -- (3,-3) -- (3,2);
	 \draw[black] (-3,-2)  -- (3,2);
	 \draw[black] (2,-0.5) node {$H$};
	 \draw[black] (2,1.7) node {$L$};
	 \coordinate (x0) at (0,3) ;
	 \draw (x0) node[right] {$x_0$};
	 \newcommand{\DRseq}{(0,0.2),(-0.48,-0.78),(-0.17,-1.25),(0.14,-1.71),(0.45,-2.17),(-1.48,-2.78),(-1.78,-2.32),(-2,-2)};
	 \foreach \x [count=\j] in \DRseq {
	   \pgfmathsetmacro{\jj}{\j-1}
	   \draw \x coordinate (x\j);
	   \draw[black,->] (x\jj) -- (x\j);
	   \draw (x\jj) node[draw=black!50,circle,inner sep=0.5pt,fill=black!50,anchor=center] {};
	 };
	 \draw[green] (-2,-2) node[draw,circle,inner sep=0.5pt,fill=black,anchor=center] {};
	 \foreach \q [count=\j] in \SetQ {
	   \draw \q node[draw,circle,inner sep=1pt,fill=black] {};
	   \draw \q node[above right] (q\j) {$q_{\j}$};
	 };
	\end{tikzpicture}
    \caption{A Douglas--Rachford iteration in $\R^2$ with $Q=\{q_1,q_2,q_3,q_4\}$.}\label{Fig:1}
\end{center}
\end{figure}

\begin{remark}
 In practice, an implementation of the Douglas--Rachford iteration would hope to terminate as soon as $q_{k_0}\in Q\cap H$ for some $k_0\in\N$. Observe, however, that~\eqref{eq:DRiteration} does not necessarily ensure that the Douglas--Rachford sequence or its auxiliary sequence remain constant for $k\geq k_0$. It is therefore important to distinguish the Douglas--Rachford iteration from an algorithm arising from its implementation (see Algorithm~\ref{alg:qwo}). \qede
\end{remark}

 It is worth emphasizing that, unlike the Douglas--Rachford algorithm, other projection algorithms can fail when applied to \eqref{eq:Problem}. An example is given in Example~\ref{ex:APfailure}. The simplest method from this family is the \emph{alternating projection algorithm}. It iterates by alternatively applying projectors onto $Q$ and $H$. Precisely, given an initial point $x_0\in\Hilbert$, it generates a sequence $\{x_k\}$ given by $x_{k+1}\in P_H(P_Q(x_k))$.
\begin{example}[Failure of alternating projections]\label{ex:APfailure}
 In general, von Neumann's alternating projection is unable to find a point in the intersection of $H$ and $Q$ (and hence the same is true for the \emph{cyclic Douglas--Rachford algorithm} \cite{cycDR1,cycDR2}). Figure~\ref{Fig:2} shows a simple example with a doubleton $Q=\{q_1,q_2\}\subset\R^2$. In this example, $P_Q(P_H(q_1))=q_1$ and the algorithm cycles between $q_1$ and $P_H(q_1)$ for any starting point $x_0\in P_Q^{-1}(q_1)$. \qede

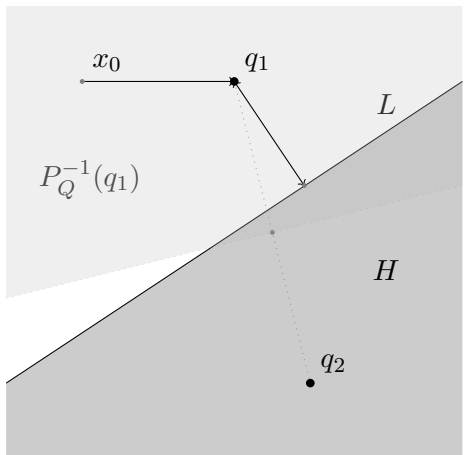
\begin{figure}[ht!]
    \begin{center}
	\begin{tikzpicture}
	 \fill[black!20] (-3,-2) -- (-3,-3) -- (3,-3) -- (3,2);
	 \draw[black] (-3,-2)  -- (3,2);
	 \draw[black] (2,-0.5) node {$H$};
	 \draw[black] (2,1.7) node {$L$};
	 \draw (0,2) coordinate (q1);
	 \draw (1,-2) coordinate (q2);
	 \draw[black!40,dotted] (q1) -- (q2);
	 \draw (0.5,0) node[draw=black!50,circle,inner sep=0.5pt,fill=black!50,anchor=center] {};
	 \fill[black!25,opacity=0.25] (-3,-0.875) -- (-3,3) -- (3,3) -- (3,0.625);
	 \draw[black!50,opacity=0.25,dotted] (-3,-0.875) -- (3,0.625);
	 \draw[black!70] (-1.9,0.7) node {$P^{-1}_Q(q_1)$};
	 \draw (0.92,0.62) coordinate (x2);
	 \draw (-2,2) node[draw=black!50,circle,inner sep=0.5pt,fill=black!50,anchor=center] (x0) {};
	 \draw (x0) node[above right] {$x_0$};
	 \draw (0,2) coordinate (x1);
	 \draw[black,->] (x0) -- (x1);
	 \draw (x1) node[draw=black!50,circle,inner sep=0.5pt,fill=black!50,anchor=center] {};
	 \draw (0.92,0.62) coordinate (x2);
	 \draw[black,<->] (x2) -- (x1);
	 \draw (x2) node[draw=black!50,circle,inner sep=0.5pt,fill=black!50,anchor=center] {};
	 \draw (q1) node[draw,circle,inner sep=1pt,fill=black] {};
	 \draw (q1) node[above right] {$q_1$};
	 \draw (q2) node[draw,circle,inner sep=1pt,fill=black] {};
	 \draw (q2) node[above right] {$q_2$};
	\end{tikzpicture}
    \end{center}
    \caption{Failure of the alternating projection algorithm for initial points in  $P^{-1}_Q(q_1)$.}\label{Fig:2}
\end{figure}
\end{example}

 We next give some examples of feasibility problems of the form in \eqref{eq:Problem} to which our results apply.

\begin{example}[Finite union of compact convex sets]
 Suppose $Q$ is a finite union of compact convex sets in $\R^m$. Applied to this problem, \cite[Theorem~1]{BNlocal} yields a local result that applies near \emph{strong fixed-points} of $\DR(\cdot)$ (these are points for which $\DR(x)=\{x\}$). Namely, that the Douglas--Rachford sequence locally converges to a \emph{fixed-point} and any cluster of its auxiliary sequence solves \eqref{eq:Problem}. Note that the existence of a strong-fixed points, $x$, implies $P_Q(x)\cap H\neq\emptyset$ (see \cite{BNlocal}).

 Theorem~\ref{th:Qcompact} complements this result by showing that, whenever $Q\cap H\neq\emptyset$, the auxiliary sequence always has at least one cluster point, regardless of the behavior of the Douglas--Rachford sequence. \qede
\end{example}

\begin{example}[Knapsack lower bound feasibility]
 The classical \emph{0-1 knapsack problem} is the binary program
 $$\min\left\{\langle c,x\rangle\mid x\in\{0,1\}^m,\,\langle a,x\rangle\leq b\right\},$$for non-negative vectors $a,c\in\R^m_+$ and a non-negative real number $b\in\R_+$.

 The \emph{0-1 knapsack lower-bound feasibility problem} is a case of~\eqref{eq:Problem} with constraints
 $$H:=\{x\in\R^m\mid \langle a,x\rangle\leq b\},\qquad Q:=\{x\in\{0,1\}^m\mid\langle c,x\rangle\geq \lambda \},$$
where $\lambda\in\R_+$. As a decision problem it is NP-complete \cite[I.5 Corollary~6.11]{NW}.

 Applied to this problem, Theorem~\ref{th:finiteQ} shows that the Douglas--Rachford method either finds a solution in finitely many iterations, or none exists and the norm of the Douglas--Rachford sequence diverges to infinity. Note that, in general, $P_Q$ usually cannot be computed efficiently.\qede
\end{example}

\begin{example}
 Consider the problem of minimizing a linear function $f=\langle a,\cdot\rangle$ over a (weakly) compact set $Q$ for which $P_Q$ is effectively computable or approximable. Problem~\eqref{eq:Problem} is a useful relaxation of this minimization when an upper-bound $b$ on the optimal value is known. Theorem~\ref{th:Qcompact} then applies. \qede
\end{example}

\begin{example}[A sphere and a half-space]
 Application of the Douglas--Rachford method to the problem of finding a point in the intersection of a sphere and an affine line (more generally an affine subspace) was originally investigated in \cite{BSabense,ABglobal}, with global convergence later proven using a Lyapunov stability argument in \cite{benoist}. Here we consider the case in which the affine line is replaced with a half-space.

 Let $Q$ be the unit sphere and $H$ a half-space in $\R^2$. By symmetry, we may assume $a=(0,1)$. Let $x_0\neq 0$ with $x_0(1)>0$. Then $x_k(1)>0$ and $q_k=\frac{x_k}{\|x_k\|}$ for all $k\in\N$, and the iteration given by \eqref{eq:DRiteration} becomes
  $$x_{k+1}(1)=\frac{x_k(1)}{\|x_k\|},\quad
    x_{k+1}(2)=\begin{cases}
                \frac{x_k(2)}{\|x_k\|}                   & \text{if }\left(\frac{2}{\|x_k\|}-1\right)x_k(2)\leq b, \\
                \left(1-\frac{1}{\|x_k\|}\right)x_k(2)+b & \text{if }\left(\frac{2}{\|x_k\|}-1\right)x_k(2)> b. \\
               \end{cases}$$

 If $Q\cap H\neq\emptyset$ (or equivalently $b\geq -1$) then Theorem~\ref{th:Qcompact} below ensures $d(q_k,H)\to 0$. It then follows that either $q_{k_0}\in H\cap Q$ for some $k_0\in\N$ ({\em i.e.,} a solution can be found in finitely many iterations), or $q_k(2)\to b$ and hence $q_k \to (\sqrt{1-b^2},b)\in Q\cap H$. \qede
\end{example}

\section{Properties of the Douglas--Rachford Operator}\label{sec:properties}

 In this section we investigate the behavior of the Douglas--Rachford operator \eqref{eq:DR} and the corresponding iteration without imposing any additional assumptions on the closed set $Q$.

 We begin by distinguishing two cases depending on whether the point is contained in the half-space. Our first proposition shows the image of $H$ under the Douglas--Rachford operator to be a subset of $H$.

\begin{proposition}\label{prop:1}
If $x\in H$ then $\DR(x)\subset H$.
\end{proposition}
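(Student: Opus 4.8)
The plan is to exploit two elementary facts: that $H$ is convex, and that the reflector $R_H$ maps all of $\Hilbert$ into $H$. Granting these, fix $x\in H$ and a selection $q\in P_Q(x)$; then $\DR(x,q)=\tfrac12\bigl(x+R_H(R_Q(x))\bigr)$ is the midpoint of $x$ and $R_H(R_Q(x))$, both of which lie in $H$, hence $\DR(x,q)\in H$ by convexity. Since this holds for every $q\in P_Q(x)$ and $\DR(x)=\bigcup_{q\in P_Q(x)}\DR(x,q)$, we conclude $\DR(x)\subset H$. Handling the set-valued nature of $\DR$ by first fixing $q$ is the only mild bookkeeping point here.

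It then remains to check that $R_H(z)\in H$ for every $z\in\Hilbert$. Put $t:=\langle a,z\rangle-b$. From the displayed formula for $R_H$ we have $R_H(z)=z$ when $t\le 0$ and $R_H(z)=z-2ta$ when $t>0$; since $\|a\|=1$, in both cases $\langle a,R_H(z)\rangle=b-|t|\le b$, so $R_H(z)\in H$. (Geometrically: points of $H$ are fixed by $R_H$, and points outside $H$ are reflected across $L$ to the other side, landing strictly inside $H$.)

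As an alternative route, one can argue directly from the explicit iteration \eqref{eq:DRiteration}, using the hypothesis $\langle a,x\rangle\le b$ in each branch. Writing $y:=\DR(x,q)$: if $\langle a,2q-x\rangle\le b$ then $y=q$ and $2\langle a,q\rangle=\langle a,2q-x\rangle+\langle a,x\rangle\le b+\langle a,x\rangle\le 2b$, so $\langle a,y\rangle\le b$; if $\langle a,2q-x\rangle> b$ then, using $\|a\|=1$, $\langle a,y\rangle=\langle a,q\rangle+\bigl(\langle a,x\rangle+b-2\langle a,q\rangle\bigr)=\langle a,x\rangle+b-\langle a,q\rangle$, and since $2\langle a,q\rangle=\langle a,2q-x\rangle+\langle a,x\rangle>b+\langle a,x\rangle\ge 2\langle a,x\rangle$ we get $\langle a,q\rangle>\langle a,x\rangle$, whence $\langle a,y\rangle<b$. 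Either way $y\in H$.

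There is essentially no serious obstacle in this proposition; the only things worth flagging are keeping track of where the assumption $\langle a,x\rangle\le b$ is actually invoked (it is needed in both branches of the case analysis) and, in the slick argument, correctly reducing the set-valued statement to a statement about each selection $q\in P_Q(x)$ so that convexity of $H$ can be applied to a genuine midpoint.
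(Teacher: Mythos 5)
Your proposal is correct, and both of your arguments go through. The second, computational argument is essentially the paper's own proof: the paper likewise verifies $\langle a,\DR(x,q)\rangle\leq b$ directly from \eqref{eq:DRiteration}, though it organizes the cases by whether $q\in H$ before looking at which branch of the formula applies (a split that mirrors the one used again in Proposition~\ref{prop:2}), whereas you split on the branches directly; both use the hypothesis $\langle a,x\rangle\leq b$ in the same places. Your first argument, however, is genuinely different and more conceptual. Writing $\DR(x,q)=\tfrac12\bigl(x+R_H(2q-x)\bigr)$ and observing that $R_H(\Hilbert)\subseteq H$ (indeed, since $\|a\|=1$ one checks $\langle a,R_H(z)\rangle=b-|\langle a,z\rangle-b|\leq b$ for every $z\in\Hilbert$), the conclusion is immediate: $\DR(x,q)$ is the midpoint of two points of the convex set $H$, and the reduction to a fixed selection $q\in P_Q(x)$ handles the set-valuedness. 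What this buys is a coordinate-free proof that isolates exactly which feature of $H$ is used, namely that the reflector of a half-space maps the whole space into the half-space --- a property that fails for the dividing hyperplane $L$, for slabs, and for cones, consistent with the counterexamples of Section~\ref{sec:examples}. The paper's brute-force computation hides this structural point but keeps the bookkeeping in the same form used throughout Section~\ref{sec:properties}. Either version is a complete and correct proof.
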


\begin{proof}
Choose any $q\in P_Q(x)$. We will distinguish two cases, depending on whether $q$ belongs to $H$. If $q\not\in H$, then
$$\langle a,2q-x\rangle=2(\langle a,q\rangle-b)+(b-\langle a,x\rangle)+b>b,$$
whence, $\DR(x,q)=q+(\langle a,x\rangle+b-2\langle a,q\rangle)a$. Thus,
$$\langle a,\DR(x,q)\rangle=\langle a,q\rangle+\langle a,x\rangle+b-2\langle a,q\rangle\leq 2b-\langle a,q\rangle<b,$$
and we have $\DR(x,q)\in H$.

Suppose now that $q\in H$. If $\langle a,2q-x\rangle\leq b$ we have $\DR(x,q)=q\in H$. Otherwise, if $\langle a,x\rangle+b<2\langle a,q\rangle$, we have $\DR(x,q)=q+(\langle a,x\rangle+b-2\langle a,q\rangle)a$ and
$$\langle a,\DR(x,q)\rangle=\langle a,x\rangle+b-\langle a,q\rangle<\langle a,q\rangle\leq b.$$
Thus, $\DR(x,q)\in H$.
\end{proof}

 Our second proposition characterizes behavior of the Douglas--Rachford operator for points which lie outside the half-space.

\begin{proposition}\label{prop:2}
Suppose that $x\not\in H$ and $q\in P_Q(x)$. The following holds:
\begin{enumerate}[(i)]
    \setlength{\itemsep}{0pt}
    \setlength{\parskip}{0pt}
    \setlength{\parsep}{0pt}
\item if $q\in H$, then $\DR(x,q)=q$;
\item if $q\not\in H$, then
    \begin{enumerate}[(a)]
    \item if $d(x,H)\geq2d(q,H)$, then $\DR(x,q)=q$ and $\DR(\DR(x,q))=P_L(q)$;
    \item if $d(x,H)< 2d(q,H)$, then $\DR(x,q)=q+(\langle a,x\rangle+b-2\langle a,q\rangle)a$, and
        \begin{enumerate}[(I)]
        \item if $d(x,H)\leq d(q,H)$, then $\DR(x,q)\in H$;
        \item if $d(x,H)> d(q,H)$, then $d(\DR(x,q),H)=d(x,H)-d(q,H)$. Furthermore, if $q\in P_Q(\DR(x,q))$, then $\DR(\DR(x,q),q)\in H$.
        \end{enumerate}
    \end{enumerate}
\end{enumerate}
\end{proposition}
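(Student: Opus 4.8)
The plan is to reduce the whole statement to elementary sign-bookkeeping in the two scalars $\alpha := d(x,H)$ and $\beta := d(q,H)$. Since $x\notin H$ we have $\alpha = \langle a,x\rangle - b > 0$; when $q\in H$ one has $\langle a,q\rangle\le b$ and $\beta = 0$, while when $q\notin H$ one has $\beta = \langle a,q\rangle - b > 0$. The quantities governing which branch of \eqref{eq:DRiteration} is active are $\langle a,2q-x\rangle$ and the coefficient $c := \langle a,x\rangle + b - 2\langle a,q\rangle$. First I would record, for $q\notin H$, the identities
$$\langle a,2q-x\rangle - b = 2\beta - \alpha, \qquad c = \alpha - 2\beta,$$
and, for $q\in H$, the cruder bound $\langle a,2q-x\rangle = 2\langle a,q\rangle - \langle a,x\rangle < 2b - b = b$.

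Part (i) is then immediate: the strict inequality just displayed puts us in the first case of \eqref{eq:DRiteration}, so $\DR(x,q)=q$. For part (ii) the first identity shows that $\langle a,2q-x\rangle\le b$ if and only if $\alpha\ge 2\beta$, which is precisely the split (a)/(b). In case (a) we are in the first branch, so $\DR(x,q)=q$; since $q\in Q$ we have $P_Q(q)=\{q\}$, hence $\DR(\DR(x,q))=\DR(q,q)$, and as $\langle a,q\rangle>b$ the second branch applies, giving $\DR(q,q)=q+(b-\langle a,q\rangle)a=q-\beta a=P_L(q)$. In case (b) the second branch applies, so $\DR(x,q)=q+ca=q+(\alpha-2\beta)a$; then $\langle a,\DR(x,q)\rangle - b = \beta + (\alpha-2\beta) = \alpha-\beta$, which yields $\DR(x,q)\in H$ when $\alpha\le\beta$ (alternative (I)) and, when $\alpha>\beta$ (alternative (II)), both $\DR(x,q)\notin H$ and $d(\DR(x,q),H)=\alpha-\beta$.

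For the ``furthermore'' clause of (II) I would set $y := \DR(x,q) = q+(\alpha-2\beta)a$, so that $\langle a,y\rangle = b + \alpha - \beta$, and work under the standing hypotheses $\beta<\alpha<2\beta$ coming from sub-case (b) together with alternative (II). Then $\langle a,2q-y\rangle - b = 3\beta-\alpha > \beta > 0$, so (assuming $q\in P_Q(y)$) the second branch of \eqref{eq:DRiteration} computes $\DR(y,q)$, and $\langle a,\DR(y,q)\rangle - b = \beta + \big(\langle a,y\rangle + b - 2\langle a,q\rangle\big) = \beta + (\alpha-3\beta) = \alpha-2\beta < 0$, whence $\DR(y,q)\in H$. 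Apart from this routine algebra, the only point needing care is the correct identification of the active branch of \eqref{eq:DRiteration} at each step — notably in case (a) and in the ``furthermore'' clause, where the combined inequalities $\beta<\alpha<2\beta$ are exactly what pins down the sign of $3\beta-\alpha$. I do not expect any genuine obstacle beyond this bookkeeping.
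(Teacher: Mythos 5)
Your proposal is correct and follows essentially the same route as the paper's proof: the same identification of the active branch of \eqref{eq:DRiteration} in each case via the sign of $\langle a,2q-x\rangle-b$, and the same algebra for the distances, merely repackaged in the scalars $\alpha=d(x,H)$ and $\beta=d(q,H)$. The only (welcome) extra care is your explicit remark that $P_Q(q)=\{q\}$ in case (ii)(a), which the paper leaves implicit when writing $\DR(\DR(x,q))=\DR(q)$.
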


\begin{proof}
\emph{(i)} If $q\in H$, then $$\langle a,2q-x\rangle=2(\langle a,q\rangle-b)+b+(b-\langle a,x\rangle)< b;$$
 whence, $\DR(x,q)=q$, as claimed.

\emph{(ii)(a)} We have $d(x,H)\geq2d(q,H)\iff \langle a,2q-x\rangle\leq b$. Thus, $\DR(x,q)=q$ and
$$\DR(\DR(x,q))=\DR(q)=q-(\langle a,q\rangle-b)a=P_L(q),$$
as required.

\emph{(ii)(b)(I)} If $\langle a,x\rangle-b=d(x,H)\leq d(q,H)=\langle a,q\rangle-b$, then
$$\langle a,\DR(x,q)\rangle=\langle a,q\rangle+\langle a,x\rangle+b-2\langle a,q\rangle=\langle a,x-q\rangle+b\leq b,$$
whence, $\DR(x,q)\in H$.

\emph{(ii)(b)(II)} If $d(x,H)> d(q,H)$, then
$$\langle a,\DR(x,q)\rangle=\langle a,x-q\rangle+b>b,$$
and
$$d(\DR(x,q),H)=\langle a,x-q\rangle=(\langle a,x\rangle-b)-(\langle a,q\rangle-b)=d(x,H)-d(q,H).$$

Further, suppose that $q\in P_Q(\DR(x,q))$. Then,
\begin{align*}
\langle a,2{q}-\DR(x,q)\rangle&=\langle a,2q-q-(\langle a,x\rangle+b-2\langle a,q\rangle)a\rangle\\
&=\langle a,q\rangle-(\langle a,x\rangle-b)+2(\langle a,q\rangle-b)\\
&=\langle a,q\rangle-d(x,H)+2d(q,H)>b.
\end{align*}
Hence,
\begin{align*}
\DR(\DR(x,q),q)&=q+\left(\langle a,q+\left(\langle a,x\rangle+b-2\langle a,q\rangle\right)a\rangle+b-2\langle a,q\rangle\right)a\\
&=q+(\langle a,x\rangle-3\langle a,q\rangle+2b)a.
\end{align*}
Finally, we have
\begin{align*}
\langle a,\DR(\DR(x,q),q)\rangle&=(\langle a,x\rangle-b)-2(\langle a,q\rangle-b)+b\\
&=d(x,H)-2d(q,H)+b<b;
\end{align*}
whence, $\DR(\DR(x,q),q)\in H$.
\end{proof}

 By combining Propositions~\ref{prop:1} \& \ref{prop:2}, we shall deduce the following lemma   concerning the behavior of Douglas--Rachford iterations which never enter the half-space.

\begin{lemma}\label{lem:qkto0}
Let $\{x_k\}$ be a Douglas--Rachford sequence with auxiliary sequence $\{q_k\}$. If $x_k\notin H$ for each $k\in\mathbb{N}$, then $q_k\not\in H$ for all $k\in\mathbb{N}$, the sequence $\{d(x_k,L)\}$ is strictly monotone decreasing,  and $$\lim_{k\to\infty}d(x_k,L)=\lim_{k\to\infty} d(q_k,L)=0.$$
\end{lemma}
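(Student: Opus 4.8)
The plan is to determine, for each $k$, which branch of Proposition~\ref{prop:2} governs the passage from $x_k$ to $x_{k+1}$, and then to read off the monotonicity and the two limits from the resulting recursion. First I would argue that $q_k\notin H$ for every $k$: if $q_k\in H$ for some $k$, then since $x_k\notin H$, Proposition~\ref{prop:2}(i) gives $x_{k+1}=\DR(x_k,q_k)=q_k\in H$, contradicting the hypothesis. Hence Proposition~\ref{prop:2}(ii) applies at every step. Next I would eliminate the sub-cases (ii)(a) and (ii)(b)(I). If step $k$ fell under (ii)(b)(I), then $x_{k+1}=\DR(x_k,q_k)\in H$, a contradiction; if it fell under (ii)(a), then $x_{k+1}=\DR(x_k,q_k)=q_k$ and $x_{k+2}\in\DR(\DR(x_k,q_k))=\{P_L(q_k)\}\subseteq L\subseteq H$, again a contradiction. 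Therefore every step falls under Proposition~\ref{prop:2}(ii)(b)(II), so that $d(x_k,H)>d(q_k,H)$ and
\[
d(x_{k+1},H)=d(x_k,H)-d(q_k,H)\qquad\text{for all }k.
\]
Because $x_k\notin H$ and $q_k\notin H$, these half-space distances coincide with the corresponding distances to $L$, and both are strictly positive: $d(x_k,L)=\langle a,x_k\rangle-b>0$ and $d(q_k,L)=\langle a,q_k\rangle-b>0$.

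The monotonicity claim is then immediate: the displayed recursion reads $d(x_{k+1},L)=d(x_k,L)-d(q_k,L)$ with $d(q_k,L)>0$, so $\{d(x_k,L)\}$ is strictly decreasing; being bounded below by $0$ it converges to some $\ell\geq 0$. Telescoping, $\sum_{k=0}^{K}d(q_k,L)=d(x_0,L)-d(x_{K+1},L)$, which is bounded, so $\sum_k d(q_k,L)<\infty$ and in particular $d(q_k,L)\to 0$.

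It remains to show $\ell=0$, and this is the step I expect to be the main obstacle: the recursion by itself only yields $\sum_k d(q_k,L)<\infty$, which is consistent with any $\ell\geq 0$. The extra ingredient I would exploit is that $q_{k+1}$ is a \emph{nearest} point of $Q$ to $x_{k+1}$, whereas $q_k$ is merely \emph{some} point of $Q$, so $\|x_{k+1}-q_{k+1}\|\leq\|x_{k+1}-q_k\|$. From the explicit formula $x_{k+1}=q_k+(\langle a,x_k\rangle+b-2\langle a,q_k\rangle)a$ of case (ii)(b), together with $\|a\|=1$ and the fact that case (ii)(b) forces $d(x_k,L)<2d(q_k,L)$, a one-line computation gives $\|x_{k+1}-q_k\|=2d(q_k,L)-d(x_k,L)$. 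Combining this with the $1$-Lipschitz continuity of $d(\cdot,L)$ and with $d(x_{k+1},L)=d(x_k,L)-d(q_k,L)$ yields
\[
d(q_{k+1},L)\;\geq\;d(x_{k+1},L)-\|x_{k+1}-q_{k+1}\|\;\geq\;2d(x_k,L)-3d(q_k,L).
\]
Letting $k\to\infty$ and using $d(x_k,L)\to\ell$ and $d(q_k,L)\to 0$ gives $0\geq 2\ell$, hence $\ell=0$. Thus $d(x_k,L)\to 0$, and $d(q_k,L)\to 0$ was already established, completing the proof.
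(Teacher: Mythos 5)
Your proposal is correct, and its skeleton matches the paper's proof: rule out $q_k\in H$ and the sub-cases (ii)(a) and (ii)(b)(I) of Proposition~\ref{prop:2}, conclude that every step satisfies $d(q_k,H)<d(x_k,H)<2d(q_k,H)$ with $d(x_{k+1},H)=d(x_k,H)-d(q_k,H)$, and telescope to get $d(q_k,H)\to 0$. The only divergence is your final step: the argument via the nearest-point inequality $\|x_{k+1}-q_{k+1}\|\leq\|x_{k+1}-q_k\|$ is valid but unnecessary, since the bound $d(x_k,L)<2d(q_k,L)$ --- which you already invoke to compute $\|x_{k+1}-q_k\|$ --- combined with $d(q_k,L)\to 0$ yields $d(x_k,L)\to 0$ immediately, which is exactly how the paper concludes.
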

\begin{proof}
 If $x_k\notin H$ for all $k$, by Propositions~\ref{prop:1} \& \ref{prop:2}, it must be that $d(q_k,H)<d(x_k,H)<2d(q_k,H)\text{ and }d(x_{k+1},H)=d(x_k,H)-d(q_k,H).$
 Thus the sequence $\{d(x_k,H)\}$ is strictly decreasing and bounded below by zero. Since $\lim_{k\to\infty} d(x_k,H)$ exists, we deduce
$$\lim_{k\to\infty}d(q_k,H)=\lim_{k\to\infty}\left[d(x_k,H)-d(x_{k+1},H)\right]=0.$$
 Thus $\lim_{k\to\infty} d(q_k,L)=\lim_{k\to\infty} d(q_k,H)=0$, and, as $d(x_k,H)<2d(q_k,H)$, we have $\lim_{k\to\infty}d(x_k,L)=\lim_{k\to\infty}d(x_k,H)=0$.
\end{proof}

 We now turn our attention to the precise structure of the Douglas--Rachford operator at points which lie within the half-space. The following proposition gives a relationship between consecutive terms in a Douglas--Rachford sequence.

\begin{proposition}\label{prop:3}
For any $x\in H$ and any $q\in P_Q(x)$ such that $q\not\in H$ one has
\begin{align*}
\DR(x,q)&=q-(d(x,L)+2d(q,L))a,\\
d(\DR(x,q),L)&=d(q,L)+d(x,L).
\end{align*}
\end{proposition}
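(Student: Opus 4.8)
The plan is to unwind the defining formula \eqref{eq:DRiteration} for $\DR(x,q)$, using the standing hypotheses $x\in H$ and $q\notin H$ to decide which of its two branches is active, and then to rewrite everything in terms of the signed scalars $b-\langle a,x\rangle$ and $\langle a,q\rangle-b$, which under our hypotheses equal $d(x,L)$ and $d(q,L)$ respectively (recall $\|a\|=1$).

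First I would record the elementary identities $d(x,L)=b-\langle a,x\rangle\ge 0$ (valid since $x\in H$) and $d(q,L)=d(q,H)=\langle a,q\rangle-b>0$ (valid since $q\notin H$). Next, to see that the second case of \eqref{eq:DRiteration} applies, I would compute $\langle a,2q-x\rangle=2\langle a,q\rangle-\langle a,x\rangle>2b-b=b$, using $\langle a,q\rangle>b\ge\langle a,x\rangle$. Hence $\DR(x,q)=q+(\langle a,x\rangle+b-2\langle a,q\rangle)a$, and substituting the two identities above into the scalar coefficient gives $\langle a,x\rangle+b-2\langle a,q\rangle=-d(x,L)-2d(q,L)$, which is exactly the first claimed formula.

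For the distance identity, I would apply $\langle a,\cdot\rangle$ to the expression just obtained; since $\|a\|=1$ this yields $\langle a,\DR(x,q)\rangle=\langle a,q\rangle-d(x,L)-2d(q,L)=b-d(x,L)-d(q,L)$. By Proposition~\ref{prop:1} we already know $\DR(x,q)\in H$ (alternatively, this is immediate from the last display since $d(q,L)>0$), so $d(\DR(x,q),L)=b-\langle a,\DR(x,q)\rangle=d(x,L)+d(q,L)$, as required.

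The argument is entirely computational, so there is no genuine obstacle; the only points deserving care are confirming that the nontrivial branch of \eqref{eq:DRiteration} is the active one and keeping track of signs when passing between inner products with $a$ and distances to $L$.
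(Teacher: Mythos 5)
Your proposal is correct and follows essentially the same route as the paper: verify that $\langle a,2q-x\rangle>b$ forces the second branch of \eqref{eq:DRiteration}, rewrite the scalar coefficient as $-(d(x,L)+2d(q,L))$, and then compute $d(\DR(x,q),L)$ from $\langle a,\DR(x,q)\rangle$. The only cosmetic difference is that you justify dropping the absolute value by observing $\DR(x,q)\in H$ (via Proposition~\ref{prop:1} or the sign of $d(q,L)$), whereas the paper evaluates the absolute value directly; both are fine.
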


\begin{proof}
Since $x\in H$ and $q\not\in H$ we have that
$\langle a,q\rangle>b\geq \langle a,x\rangle$. Thus,
$$\langle a,2q-x\rangle=\langle a,q\rangle+\langle a,q-x\rangle>b,$$
and we have
\begin{align*}
\DR(x,q)&=q-(-\langle a,x\rangle-b+2\langle a,q\rangle)a=q-(b-\langle a,x\rangle+2(\langle a,q\rangle-b))\\
&=q-(d(x,L)+2d(q,L))a.
\end{align*}
Then,
$$d(\DR(x,q),L)=\left|\langle a,q\rangle-b-d(x,L)-2d(q,L)\right|=d(q,L)+d(x,L),$$
which completes the proof.
\end{proof}

 When the Douglas--Rachford point $x$ lies in $H$, our next proposition relates $x$ to auxiliary points which lie in $Q\setminus H$.

\begin{proposition}\label{prop:4}
Let $x\in H$ and $q\in P_Q(x)\setminus H$. Then, if $p\in P_Q(\DR(x,q))\setminus H$, with $p\neq q$, one has
\begin{equation}\label{eq:prop4_inequality}
d(p,H)+\|\DR(x,q)-q\|\leq d(q,H)+d(\DR(x,q),Q).
\end{equation}
Furthermore, we have $d(p,H)<d(q,H)$.
\end{proposition}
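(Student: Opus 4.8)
The plan is to abbreviate $y:=\DR(x,q)$ and to read off from Proposition~\ref{prop:3} the three identities
$$y = q - (d(x,L)+2d(q,L))\,a,\qquad \|y-q\| = d(x,L)+2d(q,L),\qquad d(y,L) = d(x,L)+d(q,L).$$
By Proposition~\ref{prop:1} (or directly from the first identity) one has $y\in H$, hence $\langle a,y\rangle = b - d(y,L)$; moreover, since $q\notin H$ and $p\notin H$ we have $d(q,H)=d(q,L)=\langle a,q\rangle-b$ and $\langle a,p\rangle = b + d(p,H)$. The engine of the whole argument is then a single Cauchy--Schwarz estimate: because $\|a\|=1$,
$$\|y-p\| \;\ge\; |\langle a, y-p\rangle| \;=\; \bigl|(b-d(y,L))-(b+d(p,H))\bigr| \;=\; d(y,L)+d(p,H) \;=\; d(x,L)+d(q,L)+d(p,H).$$

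To obtain \eqref{eq:prop4_inequality}, I would note that $p\in P_Q(y)$ gives $d(y,Q)=\|y-p\|$, and then chain the estimate above with the identity for $\|y-q\|$:
$$d(q,H)+d(y,Q) = d(q,L)+\|y-p\| \;\ge\; d(p,H)+d(x,L)+2d(q,L) = d(p,H)+\|y-q\|,$$
which is precisely the claimed inequality.

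For the strict inequality $d(p,H)<d(q,H)$, I would combine the Cauchy--Schwarz bound with the fact that $p$ is a nearest point of $Q$ to $y$ while $q\in Q$, so $\|y-p\|\le\|y-q\|=d(x,L)+2d(q,L)$; together with $\|y-p\|\ge d(x,L)+d(q,L)+d(p,H)$ this already yields $d(p,H)\le d(q,L)=d(q,H)$. The only delicate point — which I expect to be the main, and essentially the sole, obstacle — is ruling out equality: if $d(p,H)=d(q,H)$ then both of the two inequalities just used must be equalities, and equality in Cauchy--Schwarz forces $y-p$ to be a scalar multiple of $a$, necessarily $y-p=-(d(y,L)+d(p,H))\,a$; substituting $d(p,H)=d(q,L)$ this equals $-(d(x,L)+2d(q,L))\,a = y-q$, so $p=q$, contradicting $p\neq q$. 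Hence $d(p,H)<d(q,H)$, which completes the proof.
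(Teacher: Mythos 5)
Your proof is correct and rests on the same engine as the paper's: the identities of Proposition~\ref{prop:3} combined with the estimate $d(p,H)+d(\DR(x,q),L)=\langle a,p-\DR(x,q)\rangle\le\|p-\DR(x,q)\|$, and the equality-case analysis in which collinearity with $a$ forces $p=q$. The only difference is organizational --- you derive \eqref{eq:prop4_inequality} directly and isolate the equality analysis for the strict inequality, whereas the paper splits into the equality/strict cases of that bound from the outset --- so the two arguments are essentially the same.
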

\begin{proof}
Let $z:=\DR(x,q)\in H$. Since $p\in P_Q(z)$, we have $\|z-q\|\geq \|z-p\|$.
Observe that
\begin{equation}\label{eq:1}
0< d(p,H)+d(z,L)=\langle a,p\rangle-b+b-\langle a,z\rangle=\langle a,p-z\rangle\leq \|p-z\|.
\end{equation}
If the inequality in~\eqref{eq:1} is not strict, there is some $\lambda\in\R$ such that $\lambda a=p-z$. Then,
$$\langle a,p-z\rangle=\|p-z\|\implies \lambda = \lambda \|a\|^2 = \|p-z\|>0.$$
If $\|z-q\|=\|z-p\|=\lambda$, using Proposition~\ref{prop:3}, we obtain a contradiction with the assumption that $p\neq q$:
\begin{align*}
\|p-q\|&=\|\lambda a+z-q\|=\|\lambda a-(d(x,L)+2d(q,L))a\|=|\lambda-\|z-q\||=0.
\end{align*}
Thus, $\|z-q\|>\|z-p\|=\lambda$, and by Proposition~\ref{prop:3} we have
\begin{align}
d(p,H)-d(q,H)&=\langle a,p-q\rangle=\langle a,\lambda a+z-q\rangle\nonumber\\
&=\lambda +\langle a,z-q\rangle=\lambda-\left(d(x,L)+2d(q,L)\right)\nonumber\\
&=\lambda-\|z-q\|<0,\label{eq:lastpart}
\end{align}
 whence,
  $$d(p,H)+\|z-q\|=d(q,H)+\|z-p\|=d(q,H)+d(z,Q),$$
 and we are done.

Otherwise, suppose that the inequality in~\eqref{eq:1} is strict, i.e.,
\begin{equation}\label{eq:3}
d(p,H)+d(z,L)< \|p-z\|.
\end{equation}
By Proposition~\ref{prop:3} we know that
$$\|z-q\|=d(x,L)+2d(q,L)\quad\text{and}\quad d(z,L)=d(q,L)+d(x,L);$$
whence,
$$\|z-q\|=d(q,L)+d(z,L).$$
Thus, by~\eqref{eq:3},
$$d(q,H)=d(q,L)=\|z-q\|-d(z,L)> \|z-q\|-\|z-p\|+d(p,H)\geq d(p,H),$$
which proves~\eqref{eq:prop4_inequality}. The last assertion in the statement follows  from~\eqref{eq:lastpart} and  the inequality above.
\end{proof}

\begin{remark}
 In particular, Proposition~\ref{prop:4} shows that once a Douglas--Rachford sequence enters the half-space, it is not possible for its auxiliary sequence to cycle within points from $Q\setminus H$. Indeed, once an element $q_{k_0}\in Q\setminus H$ appears as the $k_0$-th term auxiliary sequence either the sequence  remains constant with $q_k=q_{k_0}$ for all $k\geq k_0$, or  there exists $k_1\geq k_0$ such that $q_{k_0}\not\in P_Q(x_{k})$ for all $k\geq k_1$.\qede
\end{remark}

\section{Analysis of the Algorithm}\label{sec:analysis}
 In this section we establish our main results which analyze the global convergence properties of the Douglas--Rachford method assuming some additional structure on the closed set $Q$. These assumption encompass, but are not limited to, the setting in which $Q$ is a finite set, compact set, or a weakly compact set.

 Consider the set $Q\setminus H$ equipped with the binary relation $\lesssim$ defined, for all $p,p'\in Q\setminus H$, by
  $$p\lesssim p'\iff d(p,H)\leq d(p',H).$$
 Since this relation is reflexive and transitive, it defines a \emph{quasi-ordering} on the set $Q\setminus H$.

  We first consider the case in which the following assumption holds.

 \begin{assumption}\label{ass:qwo}
  The relation $\lesssim$ is a \emph{well-quasi-ordering} on $Q\setminus H$. That is, any sequence $\{z_k\}$ contained in $Q\setminus H$ contains a pair of terms such that $z_i\lesssim z_j$ and $i<j$. \qede
 \end{assumption}

 \begin{remark} Assumption~\ref{ass:qwo} is equivalent to requiring that there exist no sequence $\{p_k\}$ with $p_k\in Q\setminus H$, for all $k\in\N$, such that $\{d(p_k,H)\}$ is a strictly monotone decreasing sequence.\qede
 \end{remark}

 The following lemma shows that, under this assumption, any Douglas--Rachford sequence eventually enters the half-space.

\begin{lemma}\label{lem:DRentersH}
 Suppose Assumption~\ref{ass:qwo} holds. Then any Douglas--Rachford sequence $\{x_k\}$ enters and remains in $H$ after a finite number of steps. That is, there exists $k_0\in\mathbb{N}$ such that $x_k\in H$ for all $k\geq k_0$.
\end{lemma}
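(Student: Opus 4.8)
The plan is to show, arguing by contradiction, that no Douglas--Rachford sequence can avoid $H$ forever, and that once it is in $H$ it cannot leave again in a way that avoids $H$ indefinitely. First I would dispose of the ``never enters'' scenario: suppose $x_k \notin H$ for all $k$. Then Lemma~\ref{lem:qkto0} applies and tells us that $q_k \notin H$ for all $k$ and that $\{d(x_k,H)\}$ is strictly decreasing; moreover, from the proof of that lemma (via Proposition~\ref{prop:2}), each step satisfies $d(x_{k+1},H) = d(x_k,H) - d(q_k,H)$ with $d(q_k,H) < d(x_k,H)$. In particular $\{q_k\}$ is a sequence in $Q \setminus H$ with $d(q_{k+1},H) \le d(x_{k+1},H) < d(x_k,H)$, and I would argue that in fact $\{d(q_k,H)\}$ can be taken to be strictly decreasing (or else extract such a subsequence), which directly contradicts Assumption~\ref{ass:qwo} in the form given in the Remark following it. So some $x_{k_0} \in H$.

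Next I would handle ``re-entering'': once $x_{k_0} \in H$, I want $x_k \in H$ for all $k \ge k_0$. By Proposition~\ref{prop:1}, if $x_k \in H$ and the chosen auxiliary point $q_k \in P_Q(x_k)$ lies in $H$, then $x_{k+1} \in \DR(x_k) \subset H$ and we stay in. The only danger is $q_k \in P_Q(x_k) \setminus H$. In that case $x_{k+1} = \DR(x_k,q_k)$ may leave $H$; but here I would invoke the structure from Proposition~\ref{prop:3} and Proposition~\ref{prop:4}. Concretely, suppose for contradiction that after $x_{k_0} \in H$ the sequence leaves $H$ and thereafter never returns to $H$, i.e. there is $k_1 \ge k_0$ with $x_{k_1} \in H$, $x_k \notin H$ for all $k > k_1$. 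Then $q_{k_1} \in P_Q(x_{k_1}) \setminus H$ and $q_k \in Q \setminus H$ for $k > k_1$ (by Lemma~\ref{lem:qkto0}-type reasoning applied from $k_1+1$ onwards, since $x_k \notin H$ there). Now apply Proposition~\ref{prop:4}: with $x = x_{k_1} \in H$, $q = q_{k_1} \in P_Q(x)\setminus H$, and $p = q_{k_1+1} \in P_Q(\DR(x,q))\setminus H = P_Q(x_{k_1+1})\setminus H$, either $p = q$, or $d(p,H) < d(q,H)$. Combined with the strict decrease of $\{d(q_k,H)\}$ for $k > k_1$ coming from Lemma~\ref{lem:qkto0} (the iterates stay outside $H$), I again produce a strictly decreasing sequence $\{d(q_k,H)\}_{k \ge k_1}$ in $Q \setminus H$, contradicting Assumption~\ref{ass:qwo}. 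The remaining subtlety is the case $p = q_{k_1+1} = q_{k_1}$: here I would check, again using Proposition~\ref{prop:4} and Proposition~\ref{prop:3}, that the auxiliary sequence cannot remain constant at a point of $Q \setminus H$ while the $x_k$ stay outside $H$ — indeed the Remark after Proposition~\ref{prop:4} already records that once a Douglas--Rachford sequence is in $H$, its auxiliary sequence cannot cycle within $Q \setminus H$, and a constant tail $q_k \equiv q_{k_1}$ forces, via Proposition~\ref{prop:2}(ii)(b)(II) (the ``furthermore'' clause), a return to $H$ after at most two steps.

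The main obstacle I anticipate is bookkeeping the transition between ``in $H$'' and ``out of $H$'' cleanly: a Douglas--Rachford sequence could in principle oscillate, entering and leaving $H$ finitely or infinitely often, and I must rule out the infinitely-often case. The key realization that makes this work is that every time the sequence is outside $H$, Lemma~\ref{lem:qkto0}'s mechanism forces $d(q_k,H)$ to strictly decrease, and every time it re-enters $H$ immediately after an excursion, Proposition~\ref{prop:4} forces a strict decrease in $d(q_k,H)$ across the entry (or a constant that is itself unsustainable). Threading these two facts together shows that infinitely many excursions would generate an infinite strictly decreasing sequence $\{d(q_k,H)\}$ in $Q\setminus H$ — exactly what Assumption~\ref{ass:qwo} forbids. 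Hence only finitely many excursions occur, and after the last one the sequence is confined to $H$, giving the desired $k_0$.
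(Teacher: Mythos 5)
Your first paragraph is essentially the paper's proof and is correct: assuming $x_k\notin H$ for all $k$, Propositions~\ref{prop:1} and~\ref{prop:2} force $q_k\notin H$ and $d(x_{k+1},H)=d(x_k,H)-d(q_k,H)$ with $d(q_k,H)<d(x_k,H)<2d(q_k,H)$, from which a strictly decreasing sequence $\{d(q_k,H)\}$ in $Q\setminus H$ (the paper instead telescopes to get $d(q_k,H)\to 0$ with all terms positive and extracts a strictly decreasing subsequence) contradicts Assumption~\ref{ass:qwo}. Either route is fine.

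The entire second half of your proposal, however, rests on a misreading of Proposition~\ref{prop:1}. That proposition states unconditionally that $x\in H$ implies $\DR(x)\subset H$; since $\DR(x)=\bigcup_{q\in P_Q(x)}\DR(x,q)$, this covers \emph{every} choice of $q\in P_Q(x)$, including $q\in P_Q(x)\setminus H$ --- indeed the first case treated in its proof is precisely $q\notin H$, and it shows $\langle a,\DR(x,q)\rangle<b$ there. So your claim that ``the only danger is $q_k\in P_Q(x_k)\setminus H$, in which case $x_{k+1}$ may leave $H$'' is false: once $x_{k_0}\in H$, induction on Proposition~\ref{prop:1} immediately gives $x_k\in H$ for all $k\geq k_0$, and no excursion analysis is needed. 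Your contradiction hypothesis ``$x_{k_1}\in H$ and $x_{k_1+1}\notin H$'' is itself impossible (it already contradicts Proposition~\ref{prop:1}, which is also why Proposition~\ref{prop:4} can set $z:=\DR(x,q)\in H$), so the oscillation bookkeeping you describe, while not producing a wrong conclusion, is vacuous and should be deleted. The proof is simply: part one shows the sequence enters $H$; Proposition~\ref{prop:1} shows it never leaves.
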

\begin{proof}
 Let $\{x_k\}$ be a Douglas--Rachford sequence with auxiliary sequence $\{q_k\}$ such that $x_k\not\in H$ for all $k\in\mathbb{N}$. By Proposition~\ref{prop:1} \& Proposition~\ref{prop:2}, $q_k\not\in H$ for all $k\in\mathbb{N}$, and
  $$d(x_{k+1},H)=d(x_k,H)-d(q_k,H)\quad\text{for all }k\in\mathbb{N}.$$
 By telescoping we obtain
  $$0<d(x_{k+1},H)=d(x_1,H)-\sum_{j=1}^kd(q_j,H).$$
 We therefore deduce that $\sum_{j=1}^kd(q_j,H)<d(x_1,H)$ for all $k\in\N$, hence the sum is convergent, and in particular $d(q_k,H)\to 0$. For all $k\in\N$, $q_k\not\in H$ hence $d(q_k,H)>0$ for all $k\in\N$. Thus there exists a subsequence $\{q_{k_j}\}$ such that $\{d(q_{k_j},H)\}$ is strictly monotone decreasing. This contradicts Assumption~\ref{ass:qwo} and completes the proof.
\end{proof}

 The following example shows that the conclusions of Lemma~\ref{lem:DRentersH} need not hold without something akin to Assumption~\ref{ass:qwo}.

\begin{example}\label{ex:infiniteCompactDR}
 Consider the following subsets of the real line:
  $$Q:=\left\{\frac{2}{3^k} \;\middle|\; k=0,1,2,\dots\right\}\cup\left\{0\right\}, \quad H:=\left\{x\in\R\mid x\leq 0\right\}.$$
 For initial point $x_0=1$, the Douglas--Rachford iteration and auxiliary sequence are
  \begin{equation*}
   x_k=\frac{1}{3^k},\qquad q_k:=P_Q(x_k)=\frac{2}{3^{k+1}}.
  \end{equation*}
 Both $\{x_n\}$ and $\{q_n\}$ are positive real numbers, and hence never enter $H$. \qede
\end{example}

 We shall also require to following result regarding Assumption~\ref{ass:qwo}.

\begin{lemma}\label{lem:qkconstant}
 Suppose Assumption~\ref{ass:qwo} holds. Let $\{x_k\}$ be a Douglas--Rachford sequence with auxiliary sequence $\{q_k\}$ such that $q_k\not\in H$ for all $k\in\mathbb{N}$. Then the auxiliary sequence is eventually constant. That is, there exists $k_1\in\N$ such that $q_{k_1}=q_k$ for all $k\geq k_1$.
\end{lemma}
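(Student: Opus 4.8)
\textbf{Proof proposal for Lemma~\ref{lem:qkconstant}.}

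The plan is to combine Lemma~\ref{lem:DRentersH} (which forces the sequence $\{x_k\}$ into $H$ after finitely many steps) with the structural results of Section~\ref{sec:properties} for points inside $H$, and then extract a monotonicity statement contradicting Assumption~\ref{ass:qwo} unless the auxiliary sequence stabilizes. First I would note that since $q_k\notin H$ for all $k$, Proposition~\ref{prop:2}(i) is never the operative case; hence whenever $x_k\notin H$ we are in case (ii)(b)(II) (as in the proof of Lemma~\ref{lem:DRentersH}), and whenever $x_k\in H$ Proposition~\ref{prop:3} applies and gives $\DR(x_k,q_k)=q_k-(d(x_k,L)+2d(q_k,L))a$ with $d(x_{k+1},L)=d(q_k,L)+d(x_k,L)$. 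By Lemma~\ref{lem:DRentersH} there is $k_0$ with $x_k\in H$ for all $k\geq k_0$ (if $\{x_k\}$ never leaves $H$ we simply take $k_0$ to be the first index, and if it never enters $H$ then Lemma~\ref{lem:DRentersH} already yields a contradiction since $q_k\notin H$ for all $k$ would make $x_k\notin H$ impossible for all $k$ -- actually one must be slightly careful here, see below).

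The key step is then the following: for $k\geq k_0$, both $x_k\in H$ and $q_k\in P_Q(x_k)\setminus H$, so Proposition~\ref{prop:4} applies with $x=x_k$, $q=q_k$, $z=\DR(x_k,q_k)=x_{k+1}$, and $p=q_{k+1}\in P_Q(x_{k+1})\setminus H$. The dichotomy is: either $q_{k+1}=q_k$, or $q_{k+1}\neq q_k$ and then the final assertion of Proposition~\ref{prop:4} gives $d(q_{k+1},H)<d(q_k,H)$. So I would argue by contradiction: if the auxiliary sequence is not eventually constant, then for every $N\geq k_0$ there is some $k\geq N$ with $q_{k+1}\neq q_k$, hence with $d(q_{k+1},H)<d(q_k,H)$; combined with the fact that $d(q_{k+1},H)\leq d(q_k,H)$ always holds for $k\geq k_0$ (when $q_{k+1}=q_k$ this is an equality, and when $q_{k+1}\neq q_k$ it is the strict inequality just quoted), the sequence $\{d(q_k,H)\}_{k\geq k_0}$ is monotone decreasing with infinitely many strict decreases. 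Passing to the subsequence of indices at which strict decrease occurs produces a sequence in $Q\setminus H$ whose $H$-distances are strictly monotone decreasing, contradicting the reformulation of Assumption~\ref{ass:qwo} in the Remark following it. Therefore $\{q_k\}$ is eventually constant.

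The main obstacle, I expect, is handling the transition phase cleanly: before $k_0$ the sequence $\{x_k\}$ may oscillate in and out of $H$, and Proposition~\ref{prop:4}'s monotonicity conclusion is only available on consecutive steps for which $x_k\in H$ \emph{and} $q_{k+1}\in P_Q(x_{k+1})\setminus H$ with $q_{k+1}\ne q_k$. One must verify that $q_{k+1}\in P_Q(x_{k+1})$ is automatic (it is, since $\{q_k\}$ is by definition an auxiliary sequence for $\{x_k\}$, so $q_{k+1}\in P_Q(x_{k+1})$) and that $q_{k+1}\notin H$ is exactly our hypothesis. A secondary subtlety is ensuring $d(q_{k+1},H)\le d(q_k,H)$ really does hold for \emph{all} $k\ge k_0$, including the $q_{k+1}=q_k$ case where it is trivially equality; once this is in place the ``infinitely many strict drops'' argument is routine. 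I would also double-check the degenerate possibility that $x_{k}\in L$ for some $k\geq k_0$ (i.e. $d(x_k,L)=0$): Proposition~\ref{prop:3} still applies verbatim in that case and Proposition~\ref{prop:4} is unaffected, so no special treatment is needed. Everything else is bookkeeping on the indices.
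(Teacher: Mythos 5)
Your proposal is correct and follows essentially the same route as the paper's proof: reduce to the case $x_k\in H$ via Lemma~\ref{lem:DRentersH}, then use Proposition~\ref{prop:4} to show that any failure of eventual constancy produces a subsequence of $\{q_k\}$ with strictly decreasing distance to $H$, contradicting Assumption~\ref{ass:qwo}. The paper states this argument more tersely; your version simply fills in the bookkeeping (the dichotomy $q_{k+1}=q_k$ versus $d(q_{k+1},H)<d(q_k,H)$, and the extraction of the strictly decreasing subsequence), all of which is sound.
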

\begin{proof}
 Suppose that the auxiliary sequence $\{q_k\}$ is not eventually constant.
 By Lemma~\ref{lem:DRentersH}, we may assume, without loss of generality, that $x_k\in H$ for all $k\in\N$.
 Using Proposition~\ref{prop:4} and the fact that $\{q_k\}$ is not eventually constant, we deduce the existence of a subsequence $\{q_{k_j}\}$ such that $\{d(q_{k_j},H)\}$ is strictly monotone decreasing. This contradicts Assumption~\ref{ass:qwo} and completes the proof.
\end{proof}

 We formulate now our first main result regarding convergence of the method under Assumption~\ref{ass:qwo}.

\begin{theorem}\label{th:qwo}
 Suppose Assumption~\ref{ass:qwo} holds. Let $\{x_k\}$ be a Douglas--Rachford sequence with auxiliary sequence $\{q_k\}$. Then either: (i) there exists $k_0\in\N$ such that $q_{k_0}\in Q\cap H\neq\emptyset$, or (ii) $H\cap Q=\emptyset$. Moreover, in the latter case, $\|x_k\|\to+\infty$.
\end{theorem}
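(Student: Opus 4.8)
The plan is to split into the two mutually exclusive cases according to whether some auxiliary point lands in the half-space. First I would handle the easy implications: if $q_{k_0}\in H$ for some $k_0$, then since $q_{k_0}\in P_Q(x_{k_0})\subset Q$ we immediately get $q_{k_0}\in Q\cap H$, so $Q\cap H\neq\emptyset$ and (i) holds. So assume (i) fails, i.e.\ $q_k\notin H$ for \emph{all} $k\in\N$. Under Assumption~\ref{ass:qwo}, Lemma~\ref{lem:DRentersH} tells us the Douglas--Rachford sequence eventually enters $H$: there is $k_0$ with $x_k\in H$ for all $k\geq k_0$; and Lemma~\ref{lem:qkconstant} tells us the auxiliary sequence is eventually constant, say $q_k=\bar q$ for all $k\geq k_1\geq k_0$, with $\bar q\in Q\setminus H$.

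Next I would extract the contradiction with $Q\cap H\neq\emptyset$, and simultaneously the divergence of $\|x_k\|$, from the recursion governing the tail. For $k\geq k_1$ we have $x_k\in H$ and $\bar q=q_k\in P_Q(x_k)\setminus H$, so Proposition~\ref{prop:3} applies and gives
$$d(x_{k+1},L)=d(\bar q,L)+d(x_k,L),\qquad x_{k+1}=\bar q-(d(x_k,L)+2d(\bar q,L))a.$$
Since $\bar q\notin H$ we have $d(\bar q,L)=d(\bar q,H)>0$, so $\{d(x_k,L)\}_{k\geq k_1}$ increases by the fixed positive amount $d(\bar q,L)$ at each step and hence $d(x_k,L)\to+\infty$. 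Because $d(x_k,L)=|\langle a,x_k\rangle-b|\leq \|a\|\,\|x_k\|+|b|=\|x_k\|+|b|$, this forces $\|x_k\|\to+\infty$. It remains to show $Q\cap H=\emptyset$ in this case. Suppose for contradiction some $y\in Q\cap H$. Then for each $k\geq k_1$, $\bar q=q_k\in P_Q(x_k)$ gives $\|x_k-\bar q\|\leq\|x_k-y\|$. I would estimate both sides: from the explicit formula, $x_k-\bar q=-(d(x_{k-1},L)+2d(\bar q,L))a$ (for $k>k_1$), so $\|x_k-\bar q\|=d(x_{k-1},L)+2d(\bar q,L)$, which grows like $d(x_{k-1},L)$; meanwhile $\|x_k-y\|\geq |\langle a,x_k-y\rangle| = |\,(\langle a,x_k\rangle-b)-(\langle a,y\rangle-b)\,|$. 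Since $x_k\in H$, $\langle a,x_k\rangle\le b$, and since $y\in H$, $\langle a,y\rangle\le b$; one shows the projection inequality $\|x_k-\bar q\|\le\|x_k-y\|$ together with the geometry ($\bar q$ lies strictly outside $H$ while $x_k$ moves away from $L$ into $H$ along the $-a$ direction) yields a contradiction for large $k$ — essentially $\bar q$ cannot remain a nearest point of $Q$ to $x_k$ once $x_k$ has moved far enough away from $L$ on the $H$ side, because $y\in H$ stays on the bounded side. Concretely, projecting onto the line through $x_k$ in direction $a$: the nearest-point condition forces $\langle a, x_k-\bar q\rangle \le \tfrac12\|x_k-y\|^2/\text{(something)}$-type bounds; I expect the clean route is to observe $x_k = x_{k_1} - (k-k_1)\,d(\bar q,L)\,a + (\text{bounded})$ and plug this into $\|x_k-\bar q\|^2\le\|x_k-y\|^2$, expand, and note the $a$-component of $x_k-y$ has magnitude $\ge$ the $a$-component of $x_k-\bar q$ minus a bounded term while $y-\bar q$ has a fixed transversal component, producing $0 \le -c(k) + O(1)$ with $c(k)\to\infty$.

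The main obstacle I anticipate is precisely this last contradiction step: ruling out $Q\cap H\neq\emptyset$ while the auxiliary sequence is frozen at $\bar q\notin H$. The delicate point is that $\bar q$ being a \emph{projection} of $x_k$ onto $Q$ is a strong constraint, and one must use it quantitatively against a hypothetical $y\in Q\cap H$, exploiting that $x_k$ marches off to infinity along $-a$ (staying in $H$) so that its distance to any fixed point of $H$ is eventually dwarfed by requirements forced by the $a$-component. An alternative, possibly cleaner, argument: note that $z_k:=\DR(x_k,\bar q)=x_{k+1}\in H$ and apply Proposition~\ref{prop:4} — but Proposition~\ref{prop:4} presupposes a \emph{new} projection point $p\neq\bar q$, which we do not have here; so instead I would argue directly that if $y\in Q\cap H$ then for $k$ large $\|x_{k}-y\|<\|x_k-\bar q\|$, contradicting $\bar q\in P_Q(x_k)$. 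Writing $x_k=x_{k_1}-(k-k_1)d(\bar q,L)a$ exactly (the bounded term is in fact zero beyond the transient since $x_{k+1}-x_k=-d(\bar q,L)a$ for $k\ge k_1+1$, by Proposition~\ref{prop:3}), one gets $\|x_k-\bar q\|^2 = \|x_{k_1}-\bar q\|^2 - 2(k-k_1)d(\bar q,L)\langle a,x_{k_1}-\bar q\rangle + (k-k_1)^2 d(\bar q,L)^2$ and $\|x_k-y\|^2 = \|x_{k_1}-y\|^2 - 2(k-k_1)d(\bar q,L)\langle a,x_{k_1}-y\rangle + (k-k_1)^2 d(\bar q,L)^2$; the quadratic terms cancel, and since $\langle a, x_{k_1}-\bar q\rangle = (\langle a,x_{k_1}\rangle - b) - (\langle a,\bar q\rangle - b) \le 0 - d(\bar q,H) < 0$ while $\langle a,x_{k_1}-y\rangle=(\langle a,x_{k_1}\rangle-b)-(\langle a,y\rangle-b)$ with both bracketed terms $\le 0$, the coefficient of the linear term makes $\|x_k-\bar q\|^2-\|x_k-y\|^2 \to -\infty$ unless the linear coefficients coincide; a short case check (using $d(\bar q,H)>0$) rules that out, giving $\|x_k-\bar q\|>\|x_k-y\|$ eventually — the desired contradiction. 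Hence in case (i)-fails we must have $Q\cap H=\emptyset$, and we have already shown $\|x_k\|\to+\infty$, completing the proof.
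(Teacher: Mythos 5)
Your proposal is correct and follows essentially the same route as the paper: Lemma~\ref{lem:DRentersH} and Lemma~\ref{lem:qkconstant} to freeze the auxiliary sequence at some $\bar q\in Q\setminus H$, Proposition~\ref{prop:3} to obtain the explicit affine recursion for the tail (whence $\|x_k\|\to+\infty$), and then a comparison of $\|x_k-\bar q\|^2$ with $\|x_k-y\|^2$ for a hypothetical $y\in Q\cap H$, driven by $\langle a,\bar q-y\rangle>0$. The only slip is a sign in your penultimate clause: the difference $\|x_k-\bar q\|^2-\|x_k-y\|^2$ tends to $+\infty$ (not $-\infty$), which is precisely what gives the stated contradiction with $\bar q\in P_Q(x_k)$.
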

\begin{proof}
 Suppose $q_k\not\in H$ for all $k\in\N$.
By Lemma~\ref{lem:DRentersH}, there is some $k_0\in\mathbb{N}$ such that $x_k\in H$ for all $k\geq k_0$.  By Lemma~\ref{lem:qkconstant}, there is some $k_1\in\N$ with $k_1\geq k_0$ and some $q\in Q\setminus H$ such that $P_Q(x_k)=q$ for all $k\geq k_1$. Then, by Proposition~\ref{prop:3},
\begin{align}
x_{k+1}&=q-(d(x_{k},L)+2d(q,L))a=q-(d(x_{k-1},L)+3d(q,L))a\nonumber\\
&=\ldots=q-(d(x_{k_1},L)+(2+k-k_1)d(q,L))a,\label{eq:4}
\end{align}
for all $k\geq k_1$. Hence,
 \begin{equation}
  \|x_{k+1}\|\geq d(x_{k_1},L)+(2+k-k_1)d(q,L)-\|q\|, \label{eq:div1}
 \end{equation}
whence, $\|x_k\|\to\infty$. Further, suppose by contradiction that there is $p\in H\cap Q$. For any $k\geq k_1$,  by~\eqref{eq:4}, one has
\begin{align*}
\|x_{k+1}-p\|^2&=\|(x_{k+1}-q)+(q-p)\|^2\\
&=\|x_{k+1}-q\|^2+\|q-p\|^2+2\langle x_{k+1}-q,q-p\rangle\\
&=\|x_{k+1}-q\|^2+\|q-p\|^2-2\langle a,q-p\rangle(d(x_{k_1},L)+(2+k-k_1)d(q,L)).
\end{align*}
Observe that, since $q\not\in H$ and $p\in H$, we have
$$\langle a,q-p\rangle=(\langle a,q\rangle-b)+(b-\langle a,p\rangle)>0.$$
Thus, there is some $k_2\in\N$, with $k_2\geq k_1$, such that
$$\|q-p\|^2-2\langle a,q-p\rangle(d(x_{k_1},L)+(2+k-k_1)d(q,L))<0,\quad\forall k\geq k_2.$$
Hence,
$$\|x_{k+1}-p\|^2<\|x_{k+1}-q\|^2,\quad\forall k\geq k_2,$$
that is, we obtain a contradiction  with the fact that $P_Q(x_{k+1})=q$.
\end{proof}

Theorem~\ref{th:qwo} suggests the following algorithm for finding a point $Q\cap H$.\newline

\RestyleAlgo{boxruled}
\begin{algorithm}[H]
 \textbf{Input:} $x_{0}\in\Hilbert$\;
 Choose any $q_0\in P_Q(x_0)$\;
 Set $k:=0$\;
 \While{$q_k\not\in H$}{
      $x_{k+1}:=\DR(x_{k},q_k)$\;
      Choose any $q_{k+1}\in P_Q(x_{k+1})$\;$k:=k+1$\;}
 \textbf{Output:} $q_k\in Q\cap H$\;
\caption{The Douglas--Rachford algorithm for solving \eqref{eq:Problem} with Assumption~\ref{ass:qwo}.}\label{alg:qwo}
\end{algorithm}

 \vspace{1ex}

 The following corollary provides justification for Algorithm~\ref{ass:qwo}.

\begin{corollary}
 Suppose Assumption~\ref{ass:qwo} holds. Then Algorithm~\ref{alg:qwo} either: (i) terminates finitely to a point in $H\cap Q$, or (ii) $Q\cap H=\emptyset$ and $\|x_k\|\to+\infty$.
\end{corollary}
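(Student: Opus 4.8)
The plan is to derive this as an almost immediate consequence of Theorem~\ref{th:qwo}. First I would observe that the sequence $\{x_k\}$ produced by Algorithm~\ref{alg:qwo}, together with the chosen points $q_k$, is by construction a Douglas--Rachford iteration with auxiliary sequence $\{q_k\}$: the update $x_{k+1}:=\DR(x_k,q_k)$ with $q_k\in P_Q(x_k)$ is precisely the defining relation of such a sequence. It then remains to split into two cases according to whether the while-loop terminates.

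If the loop terminates, it does so at the first index $k$ for which the guard $q_k\not\in H$ fails, i.e. $q_k\in H$. Since at every step $q_k$ is chosen in $P_Q(x_k)\subseteq Q$, we have $q_k\in Q\cap H$, and this is the point returned by the algorithm; this gives alternative~(i) of the corollary.

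If the loop never terminates, the guard holds at every step, so $q_k\not\in H$ for all $k\in\N$; in particular there is no $k_0\in\N$ with $q_{k_0}\in Q\cap H$. Hence alternative~(i) of Theorem~\ref{th:qwo} is impossible for this Douglas--Rachford iteration, so alternative~(ii) of that theorem must hold: $H\cap Q=\emptyset$, and moreover $\|x_k\|\to+\infty$. This is alternative~(ii) of the corollary, completing the argument.

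The statement is essentially a bookkeeping corollary, so I do not anticipate a genuine obstacle; the only point requiring a little care is to confirm that non-termination of the loop exactly negates alternative~(i) of Theorem~\ref{th:qwo} --- that is, that the algorithm's termination test $q_k\in H$ matches the condition $q_{k_0}\in Q\cap H$ appearing there --- which is immediate from $q_k\in P_Q(x_k)\subseteq Q$.
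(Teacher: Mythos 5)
Your argument is correct and is exactly the intended reading of the paper's one-line proof (``Follows directly from Theorem~\ref{th:qwo}''): you identify the algorithm's iterates as a Douglas--Rachford sequence, note that termination of the while-loop yields a point of $Q\cap H$, and that non-termination negates alternative~(i) of Theorem~\ref{th:qwo}, forcing alternative~(ii). No gaps.
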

\begin{proof}
 Follows directly from Theorem~\ref{th:qwo}.
\end{proof}

 It is easily seen that any finite set satisfies Assumption~\ref{ass:qwo}, hence Theorem~\ref{th:qwo} applies. However, as we now show, a stronger result holds in this special case. We first require the following two lemmata.

\begin{lemma}\label{lem:qk in H}
 Let $\{x_k\}$ be a Douglas--Rachford sequence with auxiliary sequence $\{q_k\}$. Suppose there exists $k_0\in\N$ such that $x_{k_0}\in H$ and $q_{k_0}\in H$. Then $q_k\in H$ and  $d(q_{k+1},L)\geq d(q_k,L)$  for all $k\geq k_0$. Furthermore, for any $k\geq k_0$, one has $d(q_{k+1},L)= d(q_k,L)$ if and only if $q_{k+1}=q_{k}$.
\end{lemma}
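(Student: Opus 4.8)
The statement has three parts: (a) once the pair $(x_{k_0}, q_{k_0})$ both lie in $H$, the whole future auxiliary sequence stays in $H$; (b) $d(q_k,L)$ is nondecreasing from $k_0$ on; (c) the increase is strict unless the auxiliary point repeats. I would prove these together by induction on $k\ge k_0$, the crux being a careful analysis of the two branches of the Douglas--Rachford update \eqref{eq:DRiteration} when both $x_k$ and the candidate projection $q_k$ sit inside $H$.

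First I would establish the base step and set up the induction. Assume $x_k\in H$ and $q_k\in H$ for some $k\ge k_0$. I need to show $x_{k+1}\in H$ and $q_{k+1}\in H$. Since $x_k\in H$, Proposition~\ref{prop:1} already gives $\DR(x_k)\subset H$, so $x_{k+1}=\DR(x_k,q_k)\in H$ regardless of which branch is taken; moreover, since $q_k\in H$, Proposition~\ref{prop:2}(i) does not apply but the computation in the proof of Proposition~\ref{prop:1} (case $q\in H$) shows that if $\langle a,2q_k-x_k\rangle\le b$ then $x_{k+1}=q_k$, and otherwise $x_{k+1}=q_k+(\langle a,x_k\rangle+b-2\langle a,q_k\rangle)a$ with $\langle a,x_{k+1}\rangle\le b$. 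The key point for part (a) is that the reflection $R_Q(x_{k+1})=2q_{k+1}-x_{k+1}$ relevant for the \emph{next} step: I would argue that because $x_{k+1}\in H$ and $x_{k+1}$ was produced as above, the next auxiliary point $q_{k+1}\in P_Q(x_{k+1})$ must also lie in $H$. The cleanest route is: $\|x_{k+1}-q_k\|$ is small (in fact $x_{k+1}$ lies on the segment dynamics described by the update), so since $q_k\in P_Q(x_{k+1})$ is a candidate, any $q_{k+1}\in P_Q(x_{k+1})$ satisfies $\|x_{k+1}-q_{k+1}\|\le\|x_{k+1}-q_k\|$; combining this with the geometry of $H$ and the fact that $q_k\in H$ forces $q_{k+1}\in H$ as well. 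This is the step I expect to require the most care — one has to rule out that a closer point to $Q$ could jump outside $H$, and this presumably uses that $x_{k+1}$ itself is in $H$ together with the explicit form of $x_{k+1}$.

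For parts (b) and (c), with $q_k,q_{k+1}\in H$ now known, I would split on the branch of \eqref{eq:DRiteration} used to form $x_{k+1}$. If $\langle a,2q_k-x_k\rangle\le b$, then $x_{k+1}=q_k$, so $d(x_{k+1},L)=d(q_k,L)$ and one iterates the relationship $d(\DR(x,q),L)$ vs.\ $d(q,L)$; if the other branch holds, the explicit formula $x_{k+1}=q_k+(\langle a,x_k\rangle+b-2\langle a,q_k\rangle)a$ lets one compute $d(x_{k+1},L)=|\langle a,x_k\rangle+b-\langle a,q_k\rangle - b|$ directly. In either case I would then relate $d(q_{k+1},L)$ to $d(x_{k+1},L)$ and hence to $d(q_k,L)$: since $q_{k+1}\in P_Q(x_{k+1})$ and $q_k$ is a competitor, $\|x_{k+1}-q_{k+1}\|\le\|x_{k+1}-q_k\|$; projecting this inequality along $a$ (using that both $q_k,q_{k+1}$ are on the same side of $L$ as governed by $x_{k+1}\in H$) yields $d(q_{k+1},L)\ge d(q_k,L)$. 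The equality case is then forced: if $d(q_{k+1},L)=d(q_k,L)$, the inequality $\|x_{k+1}-q_{k+1}\|\le\|x_{k+1}-q_k\|$ becomes an equality in its $a$-component, and since $x_{k+1}-q_k$ is a multiple of $a$ (from the branch analysis, exactly as in the proof of Proposition~\ref{prop:4}), the orthogonal component of $q_{k+1}-q_k$ must vanish, giving $q_{k+1}=q_k$; conversely $q_{k+1}=q_k$ trivially gives equality.

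Finally I would wire the induction together: the pair $(x_{k+1},q_{k+1})$ again satisfies the hypotheses "$x_{k+1}\in H$ and $q_{k+1}\in H$", so the argument repeats, delivering $q_k\in H$, monotonicity of $\{d(q_k,L)\}$, and the strict-increase-unless-repeat dichotomy for all $k\ge k_0$. The main obstacle, as noted, is part (a) — showing the auxiliary sequence cannot leak out of $H$ — since parts (b) and (c) are then essentially the same projection-inequality-plus-collinearity argument already used in Propositions~\ref{prop:3} and~\ref{prop:4}.
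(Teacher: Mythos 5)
Your plan is correct and is essentially the paper's proof: the paper also proceeds by induction, and the step you flag as the delicate one ($q_{k+1}\in H$) is settled there by precisely the competitor-plus-collinearity mechanism you describe, namely
$\langle a,q_{k+1}\rangle\leq \langle a,x_{k+1}\rangle+\|q_{k+1}-x_{k+1}\|\leq \langle a,x_{k+1}\rangle+\|q_k-x_{k+1}\|=\langle a,q_k\rangle\leq b$,
where the final equality uses that $x_{k+1}-q_k$ is a nonpositive multiple of $a$; this single chain delivers both $q_{k+1}\in H$ and $d(q_{k+1},L)\geq d(q_k,L)$ at once. The equality case is likewise handled as you propose: equality throughout forces $\langle a,q_{k+1}-x_{k+1}\rangle=\|q_{k+1}-x_{k+1}\|=\|q_k-x_{k+1}\|$, whence $\|q_{k+1}-q_k\|=0$.
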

\begin{proof}
 We distinguish two cases. First suppose $\langle a,2q_{k_0}-x_{k_0}\rangle\leq b$. By \eqref{eq:DRiteration} we have $x_{k_0+1}=q_{k_0}$ and hence $q_{k_0+1}=q_{k_0}$.

 Now suppose $\langle a,2q_{k_0}-x_{k_0}\rangle> b$. By \eqref{eq:DRiteration},
\begin{equation}\label{eq:xk0plus1}
x_{k_0+1}=q_{k_0}-(\langle a,2q_{k_0}-x_{k_0}\rangle-b)a=q_{k_0}-d(2q_{k_0}-x_{k_0},H)a.
\end{equation}
  Then, as $q_{k_0+1}\in P_Q(x_{k_0+1})$, we have
  \begin{align}
  \langle a,q_{k_0+1}\rangle&=\langle a,q_{k_0+1}-x_{k_0+1}\rangle+\langle a,x_{k_0+1}\rangle\nonumber\\
  &\leq\|q_{k_0+1}-x_{k_0+1}\|+\langle a,q_{k_0}\rangle-d(2q_{k_0}-x_{k_0},H)\label{eq:ineq1}\\
  &\leq \|q_{k_0}-x_{k_0+1}\|+\langle a,q_{k_0}\rangle-d(2q_{k_0}-x_{k_0},H)\label{eq:ineq2}\\
  &= \langle a,q_{k_0}\rangle\leq b\nonumber,
  \end{align}
 whence $q_{k_0+1}\in H$ and $d(q_{k_0+1},L)\geq d(q_{k_0},L)$.

 Further, let us assume that $d(q_{k_0+1},L)=d(q_{k_0},L)$. Then, we have $\langle a,q_{k_0+1}\rangle=\langle a,q_{k_0}\rangle$. Therefore the inequalities in~\eqref{eq:ineq1} and~\eqref{eq:ineq2} must be equalities, from where we deduce
 \begin{equation}\label{eq:3equalities}
 \langle a,q_{k_0+1}-x_{k_0+1}\rangle=\|q_{k_0+1}-x_{k_0+1}\|=\|q_{k_0}-x_{k_0+1}\|.
 \end{equation}
 Hence, by~\eqref{eq:3equalities} and~\eqref{eq:xk0plus1}, we have
 \begin{align*}
 \langle q_{k_0+1}-x_{k_0+1},x_{k_0+1}-q_{k_0}\rangle&=\langle q_{k_0+1}-x_{k_0+1},-d(2q_{k_0}-x_{k_0},H)a\rangle\\
 &=-\|x_{k_0+1}-q_{k_0}\|\langle q_{k_0+1}-x_{k_0+1},a\rangle\\
 &=-\|x_{k_0+1}-q_{k_0}\|^2.
 \end{align*}
 Thus, using again~\eqref{eq:3equalities}, we obtain
 \begin{align*}
 \|q_{k_0+1}-q_{k_0}\|^2&=\|q_{k_0+1}-x_{k_0+1}\|^2+\|x_{k_0+1}-q_{k_0}\|^2+2\langle q_{k_0+1}-x_{k_0+1},x_{k_0+1}-q_{k_0}\rangle\\
 &= \|q_{k_0+1}-x_{k_0+1}\|^2-\|x_{k_0+1}-q_{k_0}\|^2=0;
 \end{align*}
 that is, $q_{k_0+1}=q_{k_0}$. The result now follows by induction.
\end{proof}

\begin{lemma}\label{lem:xkqk in H}
 Let $\{x_k\}$ be a Douglas--Rachford sequence with auxiliary sequence $\{q_k\}$.
 Suppose there exists $k_0\in\N$ such that $x_{k}\in H$ and $q_k=q\in H$ for all $k\geq k_0$. Then there exists $k_1\in \N$ with $k_1\geq k_0$ such that $x_k=x_{k_1}$ for all $k\geq k_1$.
\end{lemma}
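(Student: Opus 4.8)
The plan is to work directly with the explicit recursion: for $k \ge k_0$ we have $x_{k+1} = \DR(x_k, q)$ with $q \in H$ fixed, so only the two branches of formula \eqref{eq:DRiteration} are in play. The starting observation is that $\DR(q,q) = q$: since $q \in H$ we have $\langle a, 2q - q\rangle = \langle a, q\rangle \le b$, so the first branch applies. Consequently, if at some step $k \ge k_0$ the first branch of \eqref{eq:DRiteration} is used, then $x_{k+1} = q$, and an immediate induction, using $\DR(q,q)=q$, gives $x_j = q$ for all $j \ge k+1$; the lemma then holds with $k_1 = k+1$. So it remains to treat the case in which the \emph{second} branch is used at every step $k \ge k_0$.

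In that case $\langle a, 2q - x_k\rangle > b$ and $x_{k+1} = q + \bigl(\langle a, x_k\rangle + b - 2\langle a, q\rangle\bigr) a$ for all $k \ge k_0$. Taking $\langle a, \cdot\rangle$ of this identity and using $\|a\| = 1$ yields the scalar recursion $\langle a, x_{k+1}\rangle = \langle a, x_k\rangle + \bigl(b - \langle a, q\rangle\bigr)$, hence by telescoping $\langle a, x_k\rangle = \langle a, x_{k_0}\rangle + (k - k_0)\bigl(b - \langle a, q\rangle\bigr)$. Because $q \in H$ the increment $b - \langle a, q\rangle$ is $\ge 0$, while $x_k \in H$ forces $\langle a, x_k\rangle \le b$ for every $k$; a strictly positive increment would send $\langle a, x_k\rangle$ to $+\infty$, which is impossible. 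Therefore $b - \langle a, q\rangle = 0$, that is $q \in L$, and $\langle a, x_k\rangle$ equals the constant $\langle a, x_{k_0}\rangle$ for all $k \ge k_0$.

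Finally, with $\langle a, q\rangle = b$ the second-branch formula simplifies to $x_{k+1} = q + \bigl(\langle a, x_k\rangle - b\bigr) a$, which depends on $x_k$ only through the now-constant scalar $\langle a, x_k\rangle$; hence $x_{k+1}$ is the same vector for every $k \ge k_0$, and the lemma holds with $k_1 = k_0 + 1$. I do not expect a serious obstacle here: the one thing to get right is the case split — realizing that using the first branch even once immediately deposits the iterate on the fixed point $q$, so the genuinely live scenario is the persistent second branch, where the fact that $\langle a, x_k\rangle$ cannot escape the upper bound $b$ imposed by $H$ is exactly what forces the increment to vanish and pins down the iterate. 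No earlier result is needed beyond \eqref{eq:DRiteration} itself, although Proposition~\ref{prop:3} supplies the same second-branch identity if one prefers to cite it.
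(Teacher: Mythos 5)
Your proof is correct and follows essentially the same route as the paper: split on whether the first branch of \eqref{eq:DRiteration} ever fires (in which case the iterate lands on the fixed point $q$), and otherwise derive a scalar arithmetic progression whose increment must vanish, forcing $q\in L$ and a constant iterate. The only cosmetic difference is that you bound $\langle a,x_k\rangle$ above by $b$ using $x_k\in H$, whereas the paper bounds $\langle a,2q-x_k\rangle$ below by $b$ using the branch condition — the same computation in disguise.
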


\begin{proof}
If $\langle a,2q-x_j\rangle\leq b$ for some $j\geq k_0$, then by~\eqref{eq:DRiteration} we deduce $x_{k}=q$ for all $k\geq j+1$ and we are done.

Otherwise, by~\eqref{eq:DRiteration}, we have $x_{k+1}=q+(\langle a,x_k\rangle+b-2\langle a,q\rangle)a$ for all $k\geq k_0$. Thus,
  \begin{align*}
   \langle a,2q-x_{k+1}\rangle &= \left\langle a,2q-(q+(\langle a,x_k\rangle+b-2\langle a,q\rangle)a)\right\rangle \\
                   &=\langle a,2q-x_k\rangle - (b-\langle a,q\rangle) \\
                   &= \langle a,2q-x_k\rangle-d(q,L),
  \end{align*}
  whence,
  $$b<\langle a,2q-x_k\rangle = \langle a,2q-x_{k_0}\rangle-(k-k_0)d(q,L).$$
This implies $d(q,L)=0$ and then,
$$x_{k+1}=q+(\langle a,x_k\rangle+b-2\langle a,q\rangle)a=q+(\langle a,x_k\rangle -b)a,$$
from which we deduce $x_{k}=q+(\langle a,x_k\rangle -b)a$ for all $k\geq k_0+1$. This completes the proof.
\end{proof}

 The following theorem is a refinement of Theorem~\ref{th:qwo} when $Q$ is assumed to be finite.

\begin{theorem}\label{th:finiteQ}
  Suppose $Q$ is finite. Let $\{x_k\}$ be a Douglas--Rachford sequence with auxiliary sequence $\{q_k\}$. Then either: (i) $\{x_k\}$ and $\{q_k\}$ are eventually constant and the limit of $\{q_k\}$ is contained in $H\cap Q\neq\emptyset$, or (ii) $H\cap Q=\emptyset$ and $\|x_k\|\to+\infty$.
\end{theorem}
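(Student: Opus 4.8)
The plan is to derive Theorem~\ref{th:finiteQ} from Theorem~\ref{th:qwo} together with the finiteness of $Q$ and the two preceding lemmata. Since any finite set trivially satisfies Assumption~\ref{ass:qwo}, Theorem~\ref{th:qwo} already gives the dichotomy: either $q_k \notin H$ for all $k$ --- in which case $H \cap Q = \emptyset$ and $\|x_k\| \to +\infty$, giving case~(ii) --- or there exists $k_0 \in \N$ with $q_{k_0} \in Q \cap H \neq \emptyset$. So the real work is confined to the second case, where I must upgrade the bare existence of one auxiliary point in $H$ to the statement that both $\{x_k\}$ and $\{q_k\}$ are \emph{eventually constant} with limit in $H \cap Q$.

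First I would establish that once $q_{k_0} \in H$, the whole Douglas--Rachford sequence has already entered $H$ by step $k_0$ (or very shortly after). This needs a small observation: by Proposition~\ref{prop:1}, $H$ is $\DR$-invariant, so it suffices to show $x_{k_0} \in H$ or $x_{k_0+1}\in H$. If $x_{k_0} \notin H$, then Proposition~\ref{prop:2}(i) with $q_{k_0}\in H$ gives $x_{k_0+1} = \DR(x_{k_0},q_{k_0}) = q_{k_0} \in H$; and if $x_{k_0}\in H$ already, Proposition~\ref{prop:1} keeps us there. Either way, there is some index --- call it $k_0'$, with $k_0' \in \{k_0, k_0+1\}$ --- such that $x_k \in H$ for all $k \geq k_0'$, and $q_{k_0'}\in H$ in the first subcase (and in the second subcase we still have $q_{k_0}\in H$ with $x_{k_0}\in H$). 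In all cases we reach a stage where both $x_{k_0'}\in H$ and $q_{k_0'}\in H$ hold simultaneously.

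Next I would invoke Lemma~\ref{lem:qk in H}: from such a stage on, $q_k \in H$ for all $k \geq k_0'$ and $\{d(q_k,L)\}$ is monotone nondecreasing, with $d(q_{k+1},L) = d(q_k,L)$ precisely when $q_{k+1} = q_k$. Since $Q$ is finite, the set of values $\{d(q_k,L) : k \geq k_0'\}$ is finite, so the nondecreasing sequence $\{d(q_k,L)\}$ must stabilize: there is $k_1 \geq k_0'$ with $d(q_{k+1},L) = d(q_k,L)$ for all $k \geq k_1$, and the equality clause of Lemma~\ref{lem:qk in H} then forces $q_k = q_{k_1} =: q \in H$ for all $k \geq k_1$. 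Now Lemma~\ref{lem:xkqk in H} applies with this $q$ and gives some $k_2 \geq k_1$ with $x_k = x_{k_2}$ for all $k \geq k_2$. Thus both sequences are eventually constant; the common limit of $\{q_k\}$ is $q \in H \cap Q$ (recall $q_k \in P_Q(x_k)\subseteq Q$), so $H\cap Q\neq\emptyset$, which is case~(i). The main obstacle --- and it is a mild one --- is the bookkeeping needed to guarantee that $x_k$ and $q_k$ lie in $H$ at a common index so that Lemma~\ref{lem:qk in H} can be triggered; once that is in place, finiteness of $Q$ does the rest essentially for free via the stabilization of a monotone sequence taking finitely many values.
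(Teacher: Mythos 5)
Your proof is correct and follows essentially the same route as the paper: reduce via Theorem~\ref{th:qwo} to the case where $x_{k_0}$ and $q_{k_0}$ lie in $H$ at a common index, then combine Lemma~\ref{lem:qk in H} with the finiteness of $Q$ to force $\{q_k\}$ to stabilize, and finish with Lemma~\ref{lem:xkqk in H}. Your bookkeeping for getting $x_k$ and $q_k$ into $H$ simultaneously (using Proposition~\ref{prop:2}(i) and the fact that $x_{k_0+1}=q_{k_0}\in Q$ forces $q_{k_0+1}=q_{k_0}$) is in fact slightly more explicit than the paper's own reduction.
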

\begin{proof}
 By Theorem~\ref{th:qwo}, it suffices to show that the sequences $\{x_k\}$ and $\{q_k\}$ are eventually constant and the limit of $\{q_k\}$ is contained in $H\cap Q\neq\emptyset$, assuming there exists a $k_0\in\N$ such that $x_{k_0}\in H$ and $q_{k_0}\in H$.

 To this end, suppose there is some $k_0\in\N$ such that $x_{k_0}\in H$ and $q_{k_0}\in H$. By Proposition~\ref{prop:1} and Lemma~\ref{lem:qk in H} we have $x_k\in H, q_k\in H$ and $d(q_{k+1},L)\geq d(q_k,L)$ with equality if and only if $q_{k+1}=q_k$, for all $k\geq k_0$. Since $Q$ is finite, the latter implies that there exists $k_1\geq k_0$ such that $q_{k}=q_{k_1}$ for all $k \geq k_1$. By Lemma~\ref{lem:xkqk in H}, there exists a $k_2\geq k_1$ such $x_{k}=x_{k_2}$ for all $k\geq k_2$. This completes the proof.
\end{proof}

 Define the mapping
  $$\mathcal{Q}(\cdot):=\{p\in Q \mid d(p,H)\leq d(\cdot,H)\}.$$

 We now consider the case in which one of the following assumptions hold. In particular, they include the cases in which $Q$ is (weakly) compact.

\begin{assumption}\label{ass:compactQH}
 The function $\iota_Q+d(\cdot,H)$ has compact lower-level sets. In particular, for every $q\in Q$, the set $\mathcal{Q}(q)$ is compact.\qede
\end{assumption}

\begin{assumption}\label{ass:wcompactQH}
  The function $\iota_Q+d(\cdot,H)$ has weakly compact lower-level sets. In particular, for every $q\in Q$, the set $\mathcal{Q}(q)$ is weakly compact. \qede
\end{assumption}

In finite dimension, Assumptions~\ref{ass:compactQH} \& \ref{ass:wcompactQH} may also be written in the form $$\lim_{x\in Q,\|x\|\to\infty} d(x,H)=+\infty.$$

 The following is our final main result. It characterizes behavior of the algorithm for $Q$ satisfying Assumption~\ref{ass:compactQH} (respectively Assumption~\ref{ass:wcompactQH}). In particular, it shows that the Douglas--Rachford algorithm can be used to determine consistency of the feasibility problem \eqref{eq:Problem} and to find a solution when such exist. Since the proofs are similar, we prove the result under both assumptions simultaneously.

\begin{theorem}\label{th:Qcompact}
 Suppose Assumption~\ref{ass:compactQH} holds (or Assumption~\ref{ass:wcompactQH} holds). Let $\{x_k\}$ be a Douglas--Rachford sequence with auxiliary sequence $\{q_k\}$.  Then either: (i) $d(q_k,H)\to 0$ and the set of (weak) cluster points of the auxiliary sequence is non-empty and contained in $Q\cap H$, or (ii) $d(q_k,H)\to\beta$ for some $\beta>0$ and $H\cap Q=\emptyset$. Moreover, in the latter case, $\|x_k\|\to+\infty$.
\end{theorem}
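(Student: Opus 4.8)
The plan is to split into two cases according to whether the Douglas--Rachford sequence eventually enters the half-space or not, using the machinery already developed in Sections~\ref{sec:properties} and~\ref{sec:analysis}.

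\textbf{Case 1: $x_k \notin H$ for all $k \in \mathbb{N}$.} Here Lemma~\ref{lem:qkto0} applies directly: it gives $q_k \notin H$ for all $k$, and $d(q_k, H) = d(q_k, L) \to 0$, so the first part of alternative~(i) holds. It then remains to extract a (weak) cluster point of $\{q_k\}$ and show it lies in $Q \cap H$. Since $q_k \notin H$ and $d(q_k, H) \to 0$, the sequence $\{q_k\}$ lies in $Q$ with $d(q_k, H)$ bounded, hence (after discarding finitely many terms) all $q_k$ lie in a set of the form $\mathcal{Q}(q)$ for a suitable $q$; this set is compact (Assumption~\ref{ass:compactQH}) or weakly compact (Assumption~\ref{ass:wcompactQH}), so $\{q_k\}$ has a (weak) cluster point $\bar q \in Q$. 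Because $Q$ is closed and $d(\cdot, H)$ is (weakly lower semi)continuous with $d(q_k, H) \to 0$, we get $d(\bar q, H) = 0$, i.e. $\bar q \in H$ (noting $H$ is closed and convex, hence weakly closed). Thus $\bar q \in Q \cap H$, and in fact \emph{every} cluster point lies there by the same argument, giving alternative~(i). Note that in this case $Q \cap H \neq \emptyset$, so alternative~(ii) is excluded; and since $Q \cap H \neq \emptyset$, the conclusion $\|x_k\| \to \infty$ of~(ii) is vacuously irrelevant here.

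\textbf{Case 2: $x_{k_0} \in H$ for some $k_0 \in \mathbb{N}$.} By Proposition~\ref{prop:1}, $x_k \in H$ for all $k \geq k_0$. If $q_k \in H$ for some $k \geq k_0$, then $q_k \in Q \cap H$ and alternative~(i) holds trivially (the limit set of $\{q_k\}$ need not even be analyzed further — $d(q_k,H)=0$ for that index, and one can invoke Lemma~\ref{lem:qk in H} to see the auxiliary sequence stays in $H$ and eventually $d(q_k,L)$ is nondecreasing, though this refinement is not needed for the stated conclusion). So assume $q_k \notin H$ for all $k \geq k_0$. By Proposition~\ref{prop:4} applied at each step $k \geq k_0$, either $q_{k+1} = q_k$ or $d(q_{k+1}, H) < d(q_k, H)$; in either case $\{d(q_k, H)\}_{k \geq k_0}$ is nonincreasing and bounded below by $0$, so it converges to some $\beta \geq 0$. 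If $\beta = 0$: the sequence $\{q_k\}$ is eventually in a (weakly) compact set $\mathcal{Q}(q_{k_0})$, so it has a (weak) cluster point $\bar q \in Q$; as in Case~1, $d(\bar q, H) = 0$ forces $\bar q \in Q \cap H$, and alternative~(i) holds. If $\beta > 0$: I claim $Q \cap H = \emptyset$. Suppose $p \in Q \cap H$. Again $\{q_k\}_{k \geq k_0}$ lies in the (weakly) compact set $\mathcal{Q}(q_{k_0})$, so passing to a subsequence $q_{k_j} \to \bar q$ (weakly), with $d(\bar q, H) = \beta > 0$, so $\bar q \notin H$. Now using Proposition~\ref{prop:3} one has $x_{k+1} = q_k - (d(x_k, L) + 2 d(q_k, L)) a$ whenever $q_k \notin H$, which combined with $d(x_{k+1}, L) = d(q_k, L) + d(x_k, L) \geq d(q_k,L) = d(q_k, H) \to \beta > 0$ shows $d(x_k, L)$ is (eventually strictly) increasing and unbounded, hence $\|x_k\| \to \infty$; more precisely $d(x_{k},L) \geq d(x_{k_0},L) + (k-k_0)\beta' \to \infty$ for a suitable lower bound $\beta' \in (0,\beta]$ on $d(q_k,H)$ along a tail. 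Then, mimicking the estimate at the end of the proof of Theorem~\ref{th:qwo}, expand
$$\|x_{k+1} - p\|^2 = \|x_{k+1} - q_k\|^2 + \|q_k - p\|^2 + 2\langle x_{k+1} - q_k, q_k - p\rangle,$$
substitute $x_{k+1} - q_k = -(d(x_k,L) + 2 d(q_k,L)) a$, and use $\langle a, q_k - p \rangle = d(q_k, H) + (b - \langle a, p\rangle) \geq \beta' > 0$ together with $\|q_k - p\|$ bounded (both $q_k$ and $p$ lie in the bounded set $\mathcal{Q}(q_{k_0}) \cup \{p\}$) and $d(x_k, L) \to \infty$ to conclude $\|x_{k+1} - p\|^2 < \|x_{k+1} - q_k\|^2$ for $k$ large, contradicting $q_k \in P_Q(x_{k+1})$... wait, one must be careful: Proposition~\ref{prop:4} does not assert $q_{k+1} = q_k$, so instead use that $q_{k+1} \in P_Q(x_{k+1})$ and compare $\|x_{k+1} - q_{k+1}\|$ with $\|x_{k+1} - p\|$; since $d(q_{k+1}, H) \geq \beta$ as well, the same one lies in the bounded set and the contradiction goes through with $q_{k+1}$ in place of $q_k$. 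This establishes $Q \cap H = \emptyset$ and $\|x_k\| \to \infty$, which is alternative~(ii).

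\textbf{Main obstacle.} The delicate point is the divergence-and-contradiction argument in the subcase $\beta > 0$ of Case~2: unlike in Theorem~\ref{th:qwo}, the auxiliary sequence need not be eventually constant, so the clean telescoping formula~\eqref{eq:4} is unavailable and one must instead control $d(x_k, L)$ via the recursion from Proposition~\ref{prop:3} (which holds term-by-term only while $q_k \notin H$) and handle the bounded-but-varying auxiliary points using (weak) compactness of $\mathcal{Q}(q_{k_0})$. Care is also needed to extract the uniform lower bound $\beta' > 0$ on a tail of $\{d(q_k, H)\}$ so that the growth of $d(x_k, L)$ is genuinely linear. The compactness/weak-compactness bookkeeping (ensuring $q_k$, $q_{k+1}$, and any fixed $p \in Q \cap H$ all lie in one fixed (weakly) compact level set so that norms are uniformly bounded and cluster points exist) is routine but must be stated explicitly, and the weak case additionally requires invoking weak lower semicontinuity of $\|\cdot\|$ and of $d(\cdot, H)$, and weak closedness of $H$.
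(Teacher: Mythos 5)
Your proposal is correct and follows the same overall decomposition as the paper: split on whether $\{x_k\}$ enters $H$, then on whether $\{q_k\}$ enters $H$, use the (weak) compactness of the level set $\mathcal{Q}(q_{k_0})$ to produce cluster points when $\beta=0$, and derive a contradiction with $q_{k+1}\in P_Q(x_{k+1})$ when $\beta>0$. The one genuine difference is in that last contradiction: the paper first establishes the auxiliary inequality $\|x_k-q_k\|>\|x_k-P_L(q_k)-\tfrac{\beta}{2}a\|$ and then expands $\|x_{k+1}-P_L(q_{k+1})-\tfrac{\beta}{2}a\|^2$ around $p$, whereas you expand $\|x_{k+1}-p\|^2$ directly around $q_{k+1}$, writing $x_{k+1}-q_{k+1}=(q_k-q_{k+1})-\lambda_k a$ and using $\langle a,q_{k+1}-p\rangle\geq\beta$ together with the boundedness of $\mathcal{Q}(q_{k_0})$ to make the term $-2\lambda_k\beta$ dominate. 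This is a legitimate and arguably cleaner route (note that $\beta'=\beta$ works outright, since $\{d(q_k,H)\}$ decreases \emph{to} $\beta$, so no tail extraction is needed). One small imprecision: in the subcase where some $q_{k_1}\in H$, conclusion (i) is not ``trivial'' --- you still need Lemma~\ref{lem:qk in H} to get $q_k\in H$ (hence $d(q_k,H)\to 0$) for all $k\geq k_1$, and the (weak) compactness of $\mathcal{Q}(q_{k_1})=Q\cap H$ to guarantee the cluster-point set is non-empty; you cite the right lemma but then dismiss it as unnecessary, which it is not.
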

\begin{proof}
Let $\{x_k\}$ be a Douglas--Rachford sequence with auxiliary sequence $\{q_k\}$. We distinguish two cases.

 First suppose that $x_k\not\in H$ for all $k$. By Proposition~\ref{prop:2}, $q_k\not\in H$ for all $k$ and, by Lemma~\ref{lem:qkto0}, $d(q_k,H)\to 0$. Hence there exists ${k_0}\in\N$ such $d(q_k,H)\leq d(q_{k_0},H)$ for all $k\geq k_0$, whence $q_k\in\mathcal{Q}(q_{k_0})$ for all $k\geq k_0$.  Since $\mathcal{Q}(q_{k_0})$ is (weakly) compact and $d(\cdot,H)$ is (weakly) continuous, the set of (weak) cluster points of the auxiliary sequence $\{q_{k}\}$ is non-empty and contained in $Q\cap H$.

 Next suppose there is some $k_0\in\mathbb{N}$ such that $x_{k_0}\in H$. Then, by Proposition~\ref{prop:1}, we have $x_k\in H$ for all $k\geq k_0$. On one hand, if there exists $k_1\geq k_0$ such that $q_{k_1}\in H$ then, by Lemma~\ref{lem:qk in H}, $q_k\in H$ for all $k\geq k_1$. Since $\mathcal{Q}(q_{k_1})=Q\cap H$ is (weakly) compact, it follows that the set of (weak) cluster points of the auxiliary sequence $\{q_{k}\}$ is non-empty and contained in $Q\cap H$, and we are done.

 On the other hand, suppose $q_k\not\in H$ for all $k\geq k_0$. Then by Proposition~\ref{prop:4}, $q_k\in\mathcal{Q}(q_{k_0})$ for all $k\geq k_0$,  and $\{d(q_k,H)\}_{k=k_0}^\infty$ is monotone decreasing and bounded below by zero, hence
  $$\beta:=\inf_{k\geq k_0}d(q_k,H)=\lim_{k\to\infty}d(q_k,H)\geq 0.$$
By Proposition~\ref{prop:3}, for all $k\geq k_0$,
\begin{align*}
x_{k+1}&=q_k-(d(x_{k},L)+2d(q_k,L))a\nonumber\\
&=q_k-(d(x_{k-1},L)+d(q_{k-1},L)+2d(q_k,L))a\nonumber\\
&=\ldots=q_k-\left(d(x_{k_0},L)+d(q_k,L)+\sum_{j=k_0}^kd(q_j,L)\right)a.
\end{align*}
For all $k\geq k_0$, we may therefore express $x_{k+1}=q_k-\lambda_ka$ where
 \begin{equation}
  \lambda_k:=d(x_{k_0},L)+d(q_k,L)+\sum_{j=k_0}^kd(q_j,L)\geq (k-k_0)\beta\geq 0. \label{eq:eq6}
 \end{equation}
 If $\beta=0$ then, by the (weak) compactness of $\mathcal{Q}(q_{k_0})$ and (weak) continuity of $d(\cdot,H)$, the set of (weak) cluster points of the auxiliary sequence $\{q_{k}\}$ is non-empty and contained in $Q\cap H$.
 	
 Conversely, assume $\beta>0$.
 Since it is (weakly) compact, the set $\mathcal{Q}(q_{k_0})$ is bounded, hence there exists $K>0$ such that $\|q\|\leq K$ for all $q\in \mathcal{Q}(q_{k_0})$, and thus
   \begin{equation}
    \|x_{k+1}\|\geq \lambda_k-\|q_k\|\geq \lambda_k-K\geq (k-k_0)\beta-K\to +\infty.\label{eq:7}
   \end{equation}
 To complete the proof, we must show $Q\cap H=\emptyset$. To this end, suppose there is a $p\in Q\cap H$. We claim that, for all $k\geq k_0$,
  \begin{equation*}
   \|x_k-q_k\|>\left\|x_k-P_L(q_k)-\frac{\beta}{2}a\right\|.
  \end{equation*}
To prove this claim, first observe
\begin{align*}
\left(\langle a,q_k\rangle-b-\frac{\beta}{2}\right)+2\langle a,x_k-q_k\rangle&=-\left(\langle a,q_k\rangle-b\right)-\frac{\beta}{2}-2\left(b-\langle a,x_k\rangle\right)\\
&=-d(q_k,L)-\frac{\beta}{2}-2d(x_k,L)<0.
\end{align*}
Since $\left(\langle a,q_k\rangle-b-\frac{\beta}{2}\right)=d(q_k,L)-\frac{\beta}{2}\geq\beta-\frac{\beta}{2}=\frac{\beta}{2}>0$,
we deduce the claimed result
\begin{align*}
\left\|x_k-P_L(q_k)-\frac{\beta}{2}a\right\|^2&=\left\|x_k-q_k+\left(\langle a,q_k\rangle-b-\frac{\beta}{2}\right)a\right\|^2\\
&=\|x_k-q_k\|^2+\left(\langle a,q_k\rangle-b-\frac{\beta}{2}\right)^2+
2\left(\langle a,q_k\rangle-b-\frac{\beta}{2}\right)\langle a,x_k-q_k\rangle\\
&<\|x_k-q_k\|^2.
\end{align*}
Further, for all $k\geq k_0$, we have
\begin{align*}
\|x_{k+1}-q_{k+1}\|^2&>\left\|x_{k+1}-P_L(q_{k+1})-\frac{\beta}{2}a\right\|^2\\
&=\|x_{k+1}-p\|^2+\left\|p-P_L(q_{k+1})-\frac{\beta}{2}a\right\|^2+2\left\langle x_{k+1}-p,
p-P_L(q_{k+1})-\frac{\beta}{2}a\right\rangle.
\end{align*}
Since
\begin{align*}
\left\langle a,P_L(q_{k+1})+\frac{\beta}{2}a-p\right\rangle&=\left\langle a,q_{k+1}-\left(\langle a,q_{k+1}\rangle-b\right)a+\frac{\beta}{2}a-p\right\rangle\\
&=b-\langle a,p\rangle+\frac{\beta}{2}=d(p,L)+\frac{\beta}{2},
\end{align*}
we obtain
\begin{align*}
\left\langle x_{k+1}-p,p-P_L(q_{k+1})-\frac{\beta}{2}a\right\rangle &=\left\langle q_k-\lambda_k a-p,p-P_L(q_{k+1})-\frac{\beta}{2}a\right\rangle\\
&=\left\langle q_k-p,p-P_L(q_{k+1})-\frac{\beta}{2}a\right\rangle\\
&\quad+\lambda_k \left\langle a,P_L(q_{k+1})+\frac{\beta}{2}a-p\right\rangle\\
&=\left\langle q_k-p,p-P_L(q_{k+1})-\frac{\beta}{2}a\right\rangle+\lambda_k \left(d(p,L)+\frac{\beta}{2}\right).
\end{align*}
Then,
\begin{align*}
\left\|x_{k+1}-q_{k+1}\right\|^2&>\|x_{k+1}-p\|^2+\left\|p-P_L(q_{k+1})-\frac{\beta}{2}a\right\|^2\\
&\quad+2\left\langle q_k-p,p-P_L(q_{k+1})-\frac{\beta}{2}a\right\rangle+2\lambda_k \left(d(p,L)+\frac{\beta}{2}\right).
\end{align*}
By the boundedness of $\mathcal{Q}(q_{k_0})$  and since $H\cap Q\subseteq\mathcal{Q}(q_{k_0})$, we have
$$\eta:=\min_{w,z\in \mathcal{Q}(q_{k_0})}\left\{\left\|p-P_L(z)-\frac{\beta}{2}a\right\|^2+2\left\langle w-p,p-P_L(z)-\frac{\beta}{2}a\right\rangle\right\}\in\R.$$
Therefore,
$$\left\|x_{k+1}-q_{k+1}\right\|^2>\|x_{k+1}-p\|^2+\eta+2\lambda_k\left(d(p,L)+\frac{\beta}{2}\right).$$
Finally, since $\lambda_k\to+\infty$ (by \eqref{eq:eq6}) and $d(p,L)+\frac{\beta}{2}>0$, there is some $k_1\geq k_0$ such that $\eta+2\lambda_k\left(d(p,L)+\frac{\beta}{2}\right)>0$ for all $k\geq k_1$. Then
$$\left\|x_{k+1}-q_{k+1}\right\|^2>\|x_{k+1}-p\|^2,\quad\forall k\geq k_1,$$
which contradicts the fact that $q_{k+1}\in P_Q(x_{k+1})$. We therefore have that $H\cap Q=\emptyset$ and the proof is complete.
\end{proof}

 Every compact set satisfies Assumption~\ref{ass:compactQH}, and thus we deduce the following important corollary.

\begin{corollary}
 Suppose $Q$ is a compact set (or a weakly compact set). Let $\{x_k\}$ be a Douglas--Rachford sequence with auxiliary sequence $\{q_k\}$.  Then either: (i) $d(q_k,H)\to 0$ and the set of (weak) cluster points of the auxiliary sequence is non-empty and contained in $Q\cap H$, or (ii) $d(q_k,H)\to\beta$ for some $\beta>0$ and $H\cap Q=\emptyset$. Moreover, in the latter case, $\|x_k\|\to+\infty$.
\end{corollary}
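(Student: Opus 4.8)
The plan is to observe that this Corollary is a direct specialization of Theorem~\ref{th:Qcompact}: the only thing to verify is that compactness of $Q$ implies Assumption~\ref{ass:compactQH}, and weak compactness of $Q$ implies Assumption~\ref{ass:wcompactQH}; after that the conclusion is immediate. The one preparatory fact I would record is the explicit form of the distance to the half-space, namely $d(x,H)=\max\{0,\langle a,x\rangle-b\}$. Since $\langle a,\cdot\rangle$ is a bounded linear functional it is continuous for the weak topology, and $t\mapsto\max\{0,t-b\}$ is continuous on $\R$; hence $d(\cdot,H)$ is continuous (in fact $1$-Lipschitz) and also weakly continuous, so in particular weakly lower semicontinuous, and all of its sublevel sets are both closed and weakly closed.

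Next I would carry out the verification itself. The lower-level set of $\iota_Q+d(\cdot,H)$ at height $\alpha\in\R$ is exactly $\{p\in Q\mid d(p,H)\le\alpha\}=Q\cap\{x\in\Hilbert\mid d(x,H)\le\alpha\}$, and taking $\alpha=d(q,H)$ this is precisely $\mathcal{Q}(q)$. When $Q$ is compact, this set is the intersection of the compact set $Q$ with a closed set, hence compact; when $Q$ is weakly compact, it is the intersection of the weakly compact set $Q$ with a weakly closed set, and a weakly closed subset of a weakly compact set is weakly compact. Thus Assumption~\ref{ass:compactQH} holds in the first case and Assumption~\ref{ass:wcompactQH} holds in the second.

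Finally, with the relevant assumption in hand, I would simply invoke Theorem~\ref{th:Qcompact} to obtain the stated dichotomy verbatim. I do not expect any genuine obstacle here; the only point requiring (minor) care is the weak lower semicontinuity of $d(\cdot,H)$, which is what guarantees that the weak compactness of $Q$ is transmitted to each $\mathcal{Q}(q)$. Everything else is bookkeeping, and in the finite-dimensional setting one could alternatively note directly that a bounded set $Q$ trivially satisfies $\lim_{x\in Q,\,\|x\|\to\infty}d(x,H)=+\infty$ vacuously.
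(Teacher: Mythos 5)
Your proposal is correct and follows exactly the paper's route: the paper likewise notes that every compact (resp.\ weakly compact) set satisfies Assumption~\ref{ass:compactQH} (resp.\ Assumption~\ref{ass:wcompactQH}) and then derives the corollary immediately from Theorem~\ref{th:Qcompact}. Your additional verification that the sublevel sets of $d(\cdot,H)$ are (weakly) closed half-spaces, so that each $\mathcal{Q}(q)$ inherits (weak) compactness from $Q$, is just a fuller write-up of the step the paper leaves implicit.
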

\begin{proof}
 Follows immediately from Theorem~\ref{th:Qcompact}.
\end{proof}

\begin{remark}[Rate of divergence]
 A closer look at the proofs of Theorem~\ref{th:Qcompact} (resp. Theorem~\ref{th:qwo}) in the case that $H\cap Q=\emptyset$, shows that~\eqref{eq:7} (resp. \eqref{eq:div1}) gives information regarding the rate of divergence of the Douglas--Rachford sequence. More precisely, the sequence diverges with \emph{at least linear rate} in the sense that $\|x_{k+1}\|\geq kM+K$ for some $M>0$ and $K\in\R$. \qede
\end{remark}

\begin{remark}
 In general, the Douglas--Rachford sequence need not converge finitely for compact $Q$ (see Theorem~\ref{th:finiteQ}). Example~\ref{ex:infiniteCompactDR} serves as a counter-example.\qede
\end{remark}

\begin{remark}[The auxiliary sequence]
 During each iteration of the Douglas--Rachford method, the next term in the auxiliary sequence must be selected from the set $P_Q(x_k)$. We note that all the results in this section hold regardless of how these points are chosen.\qede
\end{remark}

 The relationships amongst our assumptions are summarized in Figure~\ref{fig:relationship}.
 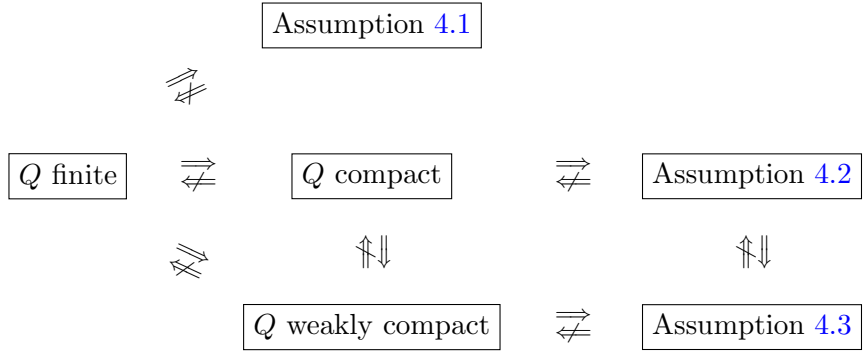
\begin{figure}[!htb]
 \begin{center}
 \begin{tikzpicture}
  \draw (4,2) node[draw,rectangle] (a1) {Assumption~\ref{ass:qwo}};
  \draw (9,0) node[draw,rectangle] (a2) {Assumption~\ref{ass:compactQH}};
  \draw (9,-2) node[draw,rectangle] (a3) {Assumption~\ref{ass:wcompactQH}};
  \draw (4,0) node[draw,rectangle] (compact) {$Q$ compact};
  \draw (4,-2) node[draw,rectangle] (wcompact) {$Q$ weakly compact};
  \draw (0,0) node[draw,rectangle] (finite) {$Q$ finite};

  \draw ($ (a2) !0.47! (compact) $) node {$\substack{\Longrightarrow\\ \centernot\Longleftarrow}$};
  \draw ($ (a3) !0.47! (wcompact) $) node {$\substack{\Longrightarrow\\ \centernot\Longleftarrow}$};
  \draw ($ (compact) !.57! (finite) $) node {$\substack{\Longrightarrow\\ \centernot\Longleftarrow}$};
   \draw ($ (wcompact) !.57! (finite) $) node[below,rotate=-25] {$\substack{\Longrightarrow\\ \centernot\Longleftarrow}$};
  \draw ($ (a1) !.57! (finite) $) node[above,rotate=25] {$\substack{\Longrightarrow\\ \centernot\Longleftarrow}$};
  \draw ($ (wcompact) !0.5! (compact) $) node[rotate=-90] {$\substack{\Longrightarrow\\ \centernot\Longleftarrow}$};
  \draw ($ (a3) !0.5! (a2) $) node[rotate=-90] {$\substack{\Longrightarrow\\ \centernot\Longleftarrow}$};
 \end{tikzpicture}
 \caption{The relationships amongst our assumptions.}\label{fig:relationship}
 \end{center}
 \end{figure}

\section{Examples and Counter-Examples}\label{sec:examples}
 In this section, we give a number of examples which highlight the role of the hyperplane in Theorems~\ref{th:qwo} and~\ref{th:Qcompact}. These examples are interesting in light of results such as \cite{BDMaffine,HLnonconvex} which exploit linear structure to analyze the Douglas--Rachford method.

\begin{example}[Failure for two half-spaces]
 The algorithm no longer remains globally convergent on replacing the half-space by a cone  resulting from the intersection of two half-spaces, as is shown in Figure~\ref{fig:cone}.\qede
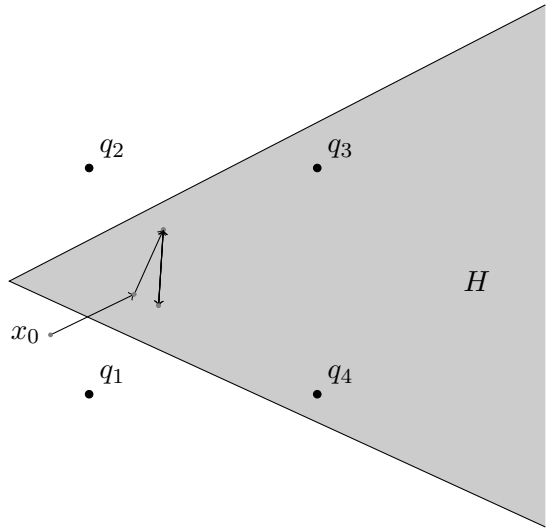
\begin{figure}[ht!]
\begin{center}
	\begin{tikzpicture}[scale=3]
	 \fill[black!20] (2,1.7212) -- (-0.35,0.5) -- (2,-0.5868);
	 \draw[black] (2,1.7212) -- (-0.35,0.5) -- (2,-0.5868);
	 \draw[black] (1.7,0.5) node {$H$};
	 \newcommand{\SetQ}{(0,0),(0,1),(1,1),(1,0)};
	 \foreach \q [count=\j] in \SetQ {
	   \draw \q node[draw,circle,inner sep=1pt,fill=black] {};
	   \draw \q node[above right] (q\j) {$q_{\j}$};
	 };
	  \coordinate (x0) at (-0.1693,0.2624);
	  \draw (x0) node[left] {$x_0$};
	  \newcommand{\DRseq}{(0.1976,0.4418),(0.3256,0.728),(0.3041,0.3924), (0.325,0.7266),(0.3046,0.3916),(0.3247,0.726)};
		 \foreach \x [count=\j] in \DRseq {
		   \pgfmathsetmacro{\jj}{\j-1}
		   \draw \x coordinate (x\j);
		   \draw[black,->] (x\jj) -- (x\j);
		   \draw (x\jj) node[draw=black!50,circle,inner sep=0.5pt,fill=black!50,anchor=center] {};
		 };
	\end{tikzpicture}
    \caption{A $2$-cycle of the Douglas--Rachford algorithm when $H$ is a cone.}\label{fig:cone}
\end{center}
\end{figure}
\end{example}

\begin{example}[Failure for hyperplane]
The Douglas--Rachford operator for $Q$ and $L$ (rather than $H$)  is given by $$\DR(x)=\bigcup_{q\in P_Q(x)}\DR(x,q)=\bigcup_{q\in P_Q(x)}q+(\langle a,x\rangle+b-2\langle a,q\rangle)a.$$
In this case, Proposition~\ref{prop:4} no longer holds, hence the algorithm need not converge. An example with cycling behavior is given in Figure~\ref{fig:hyperplane}.
    \begin{figure}[ht!]
    \begin{center}
    \begin{tikzpicture}[scale=3]
	 \draw[thick,black] (-1.5,0) -- (1.5,0);
	 \draw[black] (1.3,0.2) node {$L$};
	 \newcommand{\SetQ}{(0,1),(1,-1)};
	 \foreach \q [count=\j] in \SetQ {
	   \draw \q node[draw,circle,inner sep=1pt,fill=black] {};
	   \draw \q node[above right] (q\j) {$q_{\j}$};
	 };
	  \coordinate (x0) at (-1,1);
	  \draw (x0) node[left] {$x_0$};
	  \newcommand{\DRseq}{(0,0),(0,-1),(1,0),(1,1),(0,0)};
		 \foreach \x [count=\j] in \DRseq {
		   \pgfmathsetmacro{\jj}{\j-1}
		   \draw \x coordinate (x\j);
		   \draw[black,->] (x\jj) -- (x\j);
		   \draw (x\jj) node[draw=black!50,circle,inner sep=0.5pt,fill=black!50,anchor=center] {};
		 };
	    \draw (x5) node[draw=black!50,circle,inner sep=0.5pt,fill=black!50,anchor=center] {};
	\end{tikzpicture}
	\caption{A $4$-cycle of the Douglas--Rachford algorithm with hyperplane constraint.}\label{fig:hyperplane}
    \end{center}
    \end{figure}
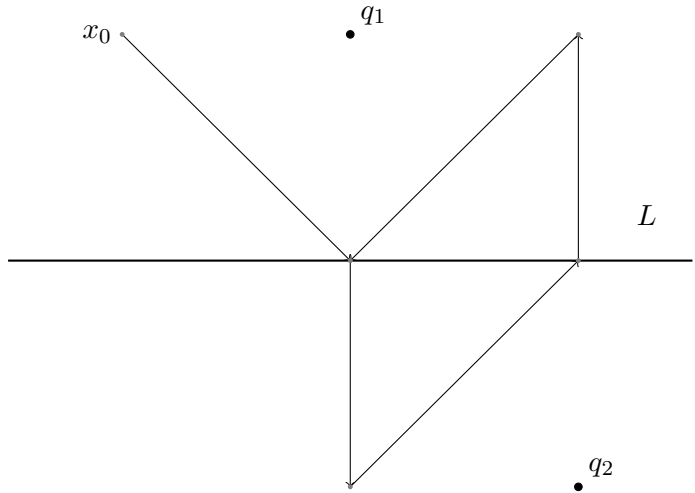
 Moreover, cycles are still possible, even in the  product formulation in terms of the diagonal space, see also Example~\ref{ex:pierra}, as the following example shows. Let $C_1=C_2:=\{0,1\}$. Consider
$$C:=C_1\times C_2=\left\{
\left(\begin{array}{c}
0\\
0
\end{array}
\right),
\left(\begin{array}{c}
0\\
1
\end{array}
\right),
\left(\begin{array}{c}
1\\
0
\end{array}
\right),
\left(\begin{array}{c}
1\\
1
\end{array}
\right)
\right\}$$
and
$$D:=\{(x,y)\in\R^2\mid x=y\}.$$
If $x_0=(-1/2,1)^T$, then $x_1=(1/4,3/4)^T$, $x_2=(3/4,1/4)^T$ and $x_3=x_1$.\qede
\end{example}

\begin{example}[Failure with slab constraints]
  The algorithm no longer remains globally convergent on replacing the half-space by a \emph{slab} constraint. That is,
   $$H := \{x\mid c\geq \langle a,x \rangle  \geq d\},$$
   with $d < c$, both finite.
 An example is given in Figure~\ref{fig:slab}.

     \begin{figure}[ht!]
    \begin{center}
    \begin{tikzpicture}[scale=3]
	 \fill[black!20] (-1.7,-0.06) -- (0.6,-0.06) -- (0.6,-0.59) -- (-1.7,-0.59) -- cycle;
	 \draw[black] (0.4,0.1) node {$H$};
	 \draw[thick,black] (-1.7,-0.06) -- (0.6,-0.06);
	 \draw[thick,black] (0.6,-0.59) -- (-1.7,-0.59) ;
	 \newcommand{\SetQ}{(0.01,-0.35),(-0.3,-0.78),(-0.43,0.01)};
	 \foreach \q [count=\j] in \SetQ {
	   \draw \q node[draw,circle,inner sep=1pt,fill=black] {};
	   \draw \q node[above right] (q\j) {$q_{\j}$};
	 };
	  \coordinate (x0) at (-1,1);
	  \draw (x0) node[left] {$x_0$};
	  \newcommand{\DRseq}{(-0.63,0.28),(-0.43,0.01),(-0.43,-0.06),(-0.43,-0.12),(-0.43,-0.19),(-0.43,-0.26),(-0.43,-0.32),(-0.43,-0.39),(-0.43,-0.45),(-0.3,-0.26),(-0.43,-0.33),(-0.43,-0.39),(-0.43,-0.46),(-0.3,-0.26),(-0.43,-0.33),(-0.43,-0.4),(-0.43,-0.46)};
		 \foreach \x [count=\j] in \DRseq {
		   \pgfmathsetmacro{\jj}{\j-1}
		   \draw \x coordinate (x\j);
		   \draw[black,->] (x\jj) -- (x\j);
		   \draw (x\jj) node[draw=black!50,circle,inner sep=0.5pt,fill=black!50,anchor=center] {};
		 };
	    \draw (x5) node[draw=black!50,circle,inner sep=0.5pt,fill=black!50,anchor=center] {};
	\end{tikzpicture}
	\caption{A $4$-cycle of the Douglas--Rachford algorithm with a slab constraint.}\label{fig:slab}
    \end{center}
    \end{figure}
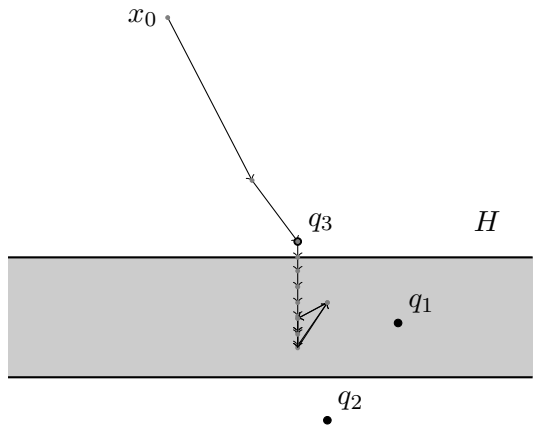
 \qede
\end{example}

\begin{example}[Failure of the product reformulation] \label{ex:pierra}
 \emph{Pierra's product space reformulation} casts any feasibility problem with finitely many constraints as an equivalent two set problem in a larger product space. As the Douglas--Rachford method can only be directly applied to two set feasibility problems, this reformulation is crucial in many applications.

 Consider the constraint sets
  $$H:=\{x\in\R^2\mid x_2\leq 1\},\qquad Q:=\{0,1\}\times\{0,1\}.$$
 Applied to this problem, the global convergence of the Douglas--Rachford method is covered by Theorem~\ref{th:finiteQ}. However, as we will show, the product space reformulation destroys convergence.

 Consider the product space $\R^2\times \R^2$ with the reformulation with constraints
  $$C:=H\times Q=\{(x,y)\mid x\in H,\,y\in Q\},\qquad D=\{(x,y)\mid x=y\}.$$
 Observe that $x\in Q\cap H$ if and only if $(x,x)\in C\cap D$. Furthermore, note that the neither of the sets $C$ or $D$ are half-spaces, hence our results no longer apply.

 Consider the Douglas--Rachford iteration with the reflection first performed with respect to the \emph{diagonal space} $D$ for initial point $(x_0,y_0)=\left((0,2/5),(0,4/5)\right)$. Then $(x_1,y_1)=\left((0,3/5),(0,1/5)\right)$ and $(x_2,y_2)=(x_0,y_0)$. That is, a $2$-cycle is obtained.

 If the reflection was instead performed first with respect to $H\times Q$ for initial point $(x_0,y_0)=\left((0,4/5),(0,2/5)\right)$, a $2$-cycle is still obtained. In this case, we have $(x_1,y_1)=\left((0,1/5),(0,3/5)\right)$ and $(x_2,y_2)=(x_0,y_0)$. \qede
\end{example}

\section{Conclusion}\label{sec:conclusion}

 We have established global convergence and described global behavior of the Douglas--Rachford method applied to the two-set feasibility problem of finding a point in the intersection of a half-space and a second potentially non-convex set, assumed to have  minimal additional structure. The improvement in provable behavior of the method applied to a half-space constraint as compared to affine constraints is quite striking. This improvement is particularly intriguing when one considers that Pierra's product space reformation, which is crucial for many non-convex applications, need only have a constraint with linear structure rather than a half-space.

 The quest for greater understanding of the convergence behavior of the non-convex Douglas--Rachford method would be advanced by finding a unifying  criterion which shows why the method works for sphere and closed hyperplane or closed half-space but not for slabs or cones.

\paragraph{Acknowledgements} F.J. Arag\'on Artacho was supported by MINECO of Spain, as part of the Ram\'on y Cajal program (RYC-2013-13327) and the I+D grant MTM2014-59179-C2-1-P.
J.M. Borwein was supported, in part, by the Australian Research Council.
M.K. Tam was supported by an Australian Post-Graduate Award.

\end{document}